\newcommand{\mathsym}[1]{{}}
\newcommand{\unicode}[1]{{}}
\definecolor{red}{rgb}{1,0,0}
\date{}
\newtheorem{thm}{Theorem}[section]
\newtheorem{prop}[thm]{Proposition}
\newtheorem{cor}[thm]{Corollary}
\newtheorem{conj}[thm]{Conjecture}
\def\noi{\noindent}
\newcommand{\abs}[1]{\left\vert#1\right\vert}
\newcommand{\set}[1]{\left[#1\right]}
\begin{document}

\title{Sharp spectral bounds for the edge-connectivity of a regular graph}
\author{Suil O \thanks{Applied Mathematics and Statistics, The State University of New York Korea, Incheon, 21985, Republic of Korea (suil.o@sunykorea.ac.kr). Research supported by NRF-2017R1D1A1B03031758.}
\and
Jeong Rye Park \thanks{Finance.Fishery.Manufacture Industrial Mathematics Center on Big Data, 
	Pusan National University, Busan, 46241, Republic of Korea (parkjr@pusan.ac.kr). Research supported by NRF-2017R1A5A1015722.}
\and
Jongyook Park\thanks{Department of Applied Mathematics, Wonkwang University, Iksan, Jeonbuk, 54538,  Republic of Korea (jongyook@wku.ac.kr). Research supported by NRF-2017R1D1A1B03032016.}
\and
Hyunju Yu\thanks{Department of Mathematics, Kyungpook National University, Daegu, 41566, Republic of Korea (lojs4110@gmail.com).}
}


\maketitle

\begin{abstract}
Let $\lambda_2(G)$ and $\kappa'(G)$ be the second largest eigenvalue
and the edge-connectivity of a graph $G$, respectively.
Let $d$ be a positive integer at least 3. For $t=1$ or 2,
Cioab{\v a} proved sharp upper bounds for $\lambda_2(G)$ in
a $d$-regular simple graph $G$ to guarantee that $\kappa'(G) \ge t+1$.
In this paper, we settle down for all $t \ge 3$.
\end{abstract}

\noi{\bf Keywords.}
Second largest eigenvalue;
edge-connectivity;
simple regular graphs;
algebraic connectivity.\\

\noi{\bf AMS subject classifications.}
05C50, 05C40.

\section{Introduction}
A {\it multigraph} is a graph that can have multiple edges but does not contain loops. A {\it simple graph} is a graph without loops or multiple edges. Thus a simple grpah is a special case of a multigraph. Let $V(G)$ and $E(G)$ be the vertex and edge set of $G$, respectively.  For $S\subseteq V(G)$, we denote by $G[S]$ and by $G-S$ the graph induced by $S$ and the subgraph of $G$ obtained from $G$ by deleting the vertices in $S$ together with the edges incident to vertices in $S$, respectively.
 A multigraph $G$ is {\em $k$-vertex-connected} if $|V(G)|>k$ and for $S\subseteq V(G)$ with $|S|<k$, $G-S$ is connected.
 The {\em vertex-connectivity} of $G$, written $\kappa(G)$, is the maximum $k$ such that $G$ is $k$-vertex-connected.  The {\em adjacency matrix} $A(G)$ of a  multigraph $G$ is the matrix whose rows
and columns are indexed by the vertex set $V(G)$, and the ($u, v$)-entry is the number of edges between $u$ and $v$. The {\em eigenvalues} of $G$ are the eigenvalues of $A(G)$. We denote the eigenvalues of $G$ by $\lambda_1(G), \ldots, \lambda_{|V(G)|}(G)$, indexed in non-increasing order. For $v \in V(G)$, the {\em degree} of $v$ is the number of edges incidient to $v$. If every vertex in $V(G)$ has the same degree $d$, then $G$ is called {\em $d$-regular}. The {\em Laplacian matrix} of $G$ is $L(G)=D(G)-A(G)$, where $D(G)$ is the diagonal matrix of degrees. We denote the eigenvalues of $L(G)$  by $\mu_1(G), \ldots, \mu_{|V(G)|}(G)$, indexed in non-decreasing order. Note that $\mu_2(G)>0=\mu_1(G)$ for a connected graph $G$, and that if $G$ is $d$-regular, then $\lambda_i(G)=d-\mu_i(G)$  for all $i\in \{1,\ldots,|V(G)|\}$. 

In 1973, Fiedler~\cite{Fiedler} showed that for any non-complete simple graph $G$, we have $\mu_2(G) \le \kappa(G)$, and he called $\mu_2(G)$ the {\em algebraic connectivity} of $G$. His work stimulated a lot of research in graph theory (see, for example ~\cite{abreu, Survey with Cheeger ineq., K & Sudakov book chapter, Mohar, O- alg conn mult graphs, rad, wu}). In 2002, Kirkland, Molitierno, Neumann and Shader~\cite{kirkland} characterized when $\mu_2(G)=\kappa(G)$.
The first author~\cite{O- alg conn mult graphs} extened the Fiedler's result to multigraphs.

A multigraph $G$ is {\em $t$-edge-connected} if 
for $S \subseteq E(G)$ with $|S|<t$, $G-S$ is connected. The {\em edge-connectivity} of $G$, written $\kappa'(G)$, is the maximum $t$ such that $G$ is $t$-edge-connected. Note that $\kappa(G)\leq\kappa'(G)$. In 2004, Chandran~\cite{Chandran 2004} showed that if $G$ is an $n$-vertex $d$-regular simple graph with $\lambda_2(G)<d-1-\frac{d}{n-d}$, then $\kappa'(G)=d$, and every minimum diconnecting edge set is trivial. In 2006, by Krivelevich and Sudakov~\cite{K & Sudakov book chapter}, this result was slightly improved as follows: if $G$ is a $d$-regular simple graph with $\lambda_2(G)\leq d-2$, then $\kappa'(G)=d$. In 2010, Cioab{\v a}~\cite{Cioaba 2010} proved that if $\lambda_2(G)<d-\frac{2t}{d+1}$, then $\kappa'(G)\geq t+1$, and he strengthened the result for $t=1,2$.

\begin{thm}\label{Cioaba1}\cite{Cioaba 2010}
Let $d$ be an odd integer at least $3$ and let $\pi(d)$ be the largest root of $x^3-(d-3)x^2-(3d-2)x-2=0$. If $G$ is a $d$-regular simple graph such that $\lambda_2(G)<\pi(d)$, then $\kappa'(G)\geq2$.
\end{thm}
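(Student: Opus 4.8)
The strategy is to prove the contrapositive: if $G$ is a $d$-regular simple graph with $\kappa'(G)\le 1$, then $\lambda_2(G)\ge\pi(d)$. If $G$ is disconnected it has at least two $d$-regular components, so $\lambda_2(G)=d>\pi(d)$; hence assume $G$ is connected and has a bridge $e=xy$, and write $G-e=G_1\cup G_2$ with $x\in V(G_1)$, $y\in V(G_2)$, $n_i=|V(G_i)|$. Every vertex of $G_i$ other than the endpoint of $e$ keeps all $d$ of its edges inside $G_i$, so $G_i$ is connected with exactly one vertex of degree $d-1$ and $\sum_{v\in V(G_i)}\deg_{G_i}(v)=dn_i-1$; this is even, so (as $d$ is odd) $n_i$ is odd, and since a degree-$d$ vertex needs $n_i\ge d+1$ while $d+1$ is even, we get $n_i\ge d+2$.

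Put $A_1=N_{G_1}(x)$, $B_1=V(G_1)\setminus(\{x\}\cup A_1)$ and symmetrically $A_2=N_{G_2}(y)$, $B_2=V(G_2)\setminus(\{y\}\cup A_2)$, so $|A_i|=d-1$, $|B_i|=n_i-d\ge 2$; let $\Pi=\{\{x\},A_1,B_1,\{y\},A_2,B_2\}$. Since $G$ is connected and $d$-regular, $\mathbf 1$ is the top eigenvector of $A(G)$, so testing $\lambda_2$ with the plane $\mathrm{span}(\mathbf 1,w)$ (the cross term vanishing) gives $\lambda_2(G)\ge\min\{d,\,w^{\top}A(G)w/w^{\top}w\}$ for every $w\perp\mathbf 1$. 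Because $\pi(d)<d$ — the defining cubic is negative at $d-1$ and positive on $[d,\infty)$, a direct check — it suffices to produce one vector $w\perp\mathbf 1$, constant on each part of $\Pi$, with $w^{\top}A(G)w\ge\pi(d)\,w^{\top}w$.

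I would take $w$ to have the bridge-antisymmetric form: $(u_1,u_2,u_3)$ on $(\{x\},A_1,B_1)$ and $-(v_1,v_2,v_3)$ on $(\{y\},A_2,B_2)$; then $w\perp\mathbf 1$ becomes the single relation $u_1+(d-1)u_2+(n_1-d)u_3=v_1+(d-1)v_2+(n_2-d)v_3$. With $e(A_i),e(B_i),e(A_i,B_i)$ the edge counts within and between the parts on side $i$, the degree conditions give $2e(A_i)+e(A_i,B_i)=(d-1)^2$ and $2e(B_i)+e(A_i,B_i)=d(n_i-d)$; eliminating $e(A_i),e(B_i)$ yields
\begin{align*}
w^{\top}A(G)w&=-2u_1v_1+2(d-1)(u_1u_2+v_1v_2)+(d-1)^2(u_2^2+v_2^2)\\
&\quad+d\big((n_1-d)u_3^2+(n_2-d)v_3^2\big)-e(A_1,B_1)(u_2-u_3)^2-e(A_2,B_2)(v_2-v_3)^2,
\end{align*}
so the dependence on the unknown structure sits only in the nonnegative terms $e(A_i,B_i)(u_2-u_3)^2$ and $e(A_i,B_i)(v_2-v_3)^2$, which are worst when $e(A_i,B_i)$ is as large as the bipartite bound $e(A_i,B_i)\le(d-1)\min\{n_i-d,d-1\}$ allows. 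It then remains to maximize the Rayleigh quotient over $u_1,u_2,u_3,v_1,v_2,v_3$ subject to the linear relation and to check that the value is nonincreasing in each $n_i$, making the worst case $n_1=n_2=d+2$. For those sizes $\Pi$ is equitable for the graph $G^{\ast}$ obtained by joining through $e$ two copies of the graph built from $K_{d-1}$ by deleting a perfect matching and adding a vertex adjacent to all of $K_{d-1}$ together with two mutually adjacent vertices each adjacent to all of $K_{d-1}$; the quotient matrix of $\Pi$ then splits, along the bridge-symmetry, into a $3\times 3$ block with eigenvalues $d,1,-2$ and the $3\times 3$ block
\[
M=\begin{pmatrix}-1 & d-1 & 0\\ 1 & d-3 & 2\\ 0 & d-1 & 1\end{pmatrix}
\]
whose characteristic polynomial equals $x^{3}-(d-3)x^{2}-(3d-2)x-2$; since the remaining eigenvalues of $G^{\ast}$ ($1$, $-2$, $0$, $-1$, and the two smaller roots of the cubic) all lie below $\pi(d)$, we get $\lambda_2(G^{\ast})=\pi(d)$, which delivers both the bound and its sharpness.

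The step I expect to be the real obstacle is the optimization just sketched: showing that the constrained maximum of the Rayleigh quotient, as a function of $n_1,n_2$ and of the admissible value of $e(A_i,B_i)$ (which must also satisfy $e(A_i,B_i)\ge d-1$, from $e(A_i)\le\binom{d-1}{2}$, and $e(A_i,B_i)\ge(n_i-d)(2d-n_i+1)$, from $e(B_i)\le\binom{n_i-d}{2}$), is genuinely minimized by the one configuration above. This requires a careful choice of $(u_1,u_2,u_3)$ and $(v_1,v_2,v_3)$ — essentially the Perron eigenvector of $M$, perturbed when $n_1\ne n_2$ — and a monotonicity argument that enlarging either side cannot decrease $\lambda_2$; the asymmetry of the two sides when $n_1\ne n_2$ is exactly what forbids a single symmetric test vector and forces the two separate triples.
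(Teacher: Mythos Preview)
The paper does not prove this statement; Theorem~\ref{Cioaba1} is quoted from \cite{Cioaba 2010} as background, so there is no ``paper's own proof'' to compare against. The closest analogue in the paper is the proof of Theorem~\ref{main} for $t\ge 3$, which proceeds via quotient-matrix interlacing on a \emph{four}-part partition $V_1\cup V_2\cup V_3\cup V_4$ (with $V_2,V_3$ the sets of endpoints of the cut edges), followed by the tridiagonal reduction of Theorem~\ref{DRG-BCN} and a Perron--Frobenius comparison to push $s,s'$ down to their minimum values. Your six-part partition refines this by splitting off the neighbourhood of the bridge endpoint on each side; this finer cut is exactly what is needed for $t=1$, since the paper's four-part scheme degenerates there ($\alpha=\beta=1$ forces $k=l=0$ and the formulas in Case~3 no longer encode anything useful).

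That said, your proposal is not a proof but an outline with a self-identified hole. You set up the Rayleigh quotient correctly, you verify that the antisymmetric $3\times 3$ block $M$ has the right characteristic polynomial, and you pin down the extremal graph $G^{\ast}$. But the entire content of the theorem lives in the step you label ``the real obstacle'': showing that for \emph{every} admissible choice of $n_1,n_2\ge d+2$ and every admissible $e(A_i,B_i)$ there exists a test vector $w\perp\mathbf 1$ with $w^{\top}A(G)w\ge\pi(d)\,w^{\top}w$. You have not produced such a vector for general $n_i$, nor have you established the monotonicity in $n_i$ that would let you reduce to $n_1=n_2=d+2$; you have only asserted that both ought to hold. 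In particular, your upper bound $e(A_i,B_i)\le (d-1)\min\{n_i-d,d-1\}$ is the wrong direction for a Rayleigh-quotient \emph{lower} bound---larger $e(A_i,B_i)$ makes your expression smaller, so you need the \emph{upper} bound on $e(A_i,B_i)$ to control the worst case, but then you must still exhibit a single $w$ that beats $\pi(d)$ uniformly, and the Perron vector of $M$ alone does not obviously do this when $n_1\ne n_2$. Until that optimization is actually carried out (for instance by a quotient-interlacing argument in the spirit of the paper's Theorem~\ref{DRG-BCN}, or by Cioab\u a's original counting), the argument is incomplete.
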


\begin{thm}\label{Cioaba2}\cite{Cioaba 2010}
If $G$ is a $d$-regular simple graph such that $\lambda_2<\frac{d-3+\sqrt{(d+3)^2-16}}{2}$, then $\kappa'(G)\geq3$.
\end{thm}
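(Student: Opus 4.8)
\noi\textbf{Proof idea.}
We prove the contrapositive: if $\kappa'(G)\le 2$ then $\lambda_2(G)\ge c$, where $c:=\frac{d-3+\sqrt{(d+3)^2-16}}{2}$. One checks at once that $c$ is the larger root of $x^2-(d-3)x-(3d-4)=0$ and that $d-1<c<d$. We may assume $G$ is connected, since otherwise $\lambda_2(G)=d>c$. If $\kappa'(G)=1$, then $d$ is odd (for even $d$ a $d$-regular graph is bridgeless, as the degree sum within one side of a bridge would be odd), and substituting $x=c$ into $x^3-(d-3)x^2-(3d-2)x-2$ and reducing with $c^2=(d-3)c+(3d-4)$ gives the value $-2(c+1)<0$, so $c<\pi(d)$; the contrapositive of Theorem~\ref{Cioaba1} then yields $\lambda_2(G)\ge\pi(d)>c$. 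So we may assume $\kappa'(G)=2$; fix a minimum edge cut of size $2$. Deleting its two edges leaves exactly two connected pieces (three pieces would force one of the two edges to be a bridge), so we obtain a partition $V(G)=S\cup\bar S$ with $G[S]$, $G[\bar S]$ connected and exactly the two cut edges running between $S$ and $\bar S$.

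A little structure follows next. From $2e(G[S])=d\abs{S}-2\le\abs{S}(\abs{S}-1)$, together with the fact that $\abs{S}\le 3$ is impossible by a direct degree count, we get $\abs{S},\abs{\bar S}\ge d+1$. Let $S_0\subseteq S$ and $\bar S_0\subseteq\bar S$ be the sets of endpoints of the two cut edges, so $\abs{S_0},\abs{\bar S_0}\in\{1,2\}$, and consider the partition of $V(G)$ into the four cells $S_0,\ S\setminus S_0,\ \bar S_0,\ \bar S\setminus\bar S_0$, all nonempty since $\abs{S},\abs{\bar S}\ge d+1>2$. Let $B$ be the associated quotient matrix; by the interlacing inequality for quotient matrices, $\lambda_2(G)\ge\lambda_2(B)$, so it suffices to prove $\lambda_2(B)\ge c$. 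Note that $d$-regularity forces every entry of $B$ once $\abs{S},\abs{\bar S},\abs{S_0},\abs{\bar S_0},e(G[S_0]),e(G[\bar S_0])$ are fixed, and that all row sums of $B$ equal $d$, whence $\lambda_1(B)=d$.

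The virtue of this $4$-cell partition is that the only edges between the cells $\{S_0,S\setminus S_0\}$ and $\{\bar S_0,\bar S\setminus\bar S_0\}$ lie in the $S_0$-$\bar S_0$ slot. A Schur-complement computation then gives
\[
\det(xI-B)=f_S(x)\,f_{\bar S}(x)-\frac{4}{\abs{S_0}\abs{\bar S_0}}\,(x-\beta_S)(x-\beta_{\bar S}),
\]
where $f_S,f_{\bar S}$ are the characteristic polynomials of the two $2\times 2$ diagonal blocks of $B$, and $\beta_S,\beta_{\bar S}$ are the lower-right entries of those blocks, i.e.\ the average degrees of $G[S\setminus S_0]$ and of $G[\bar S\setminus\bar S_0]$. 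Since $x=d$ is a root, write $\det(xI-B)=(x-d)g(x)$ with $g$ a monic cubic whose roots are $\lambda_2(B)\ge\lambda_3(B)\ge\lambda_4(B)$. Because $\lambda_1(B)=d>c$, inspecting the sign of $g$ shows that $\det(cI-B)\ge 0$ forces $c\in(-\infty,\lambda_4(B)]\cup[\lambda_3(B),\lambda_2(B)]$, hence $\lambda_2(B)\ge c$; so it is enough to prove $\det(cI-B)\ge 0$. Using $c^2=(d-3)c+(3d-4)$ to linearise $f_S(c)$ and $f_{\bar S}(c)$, the inequality $\det(cI-B)\ge 0$ becomes an explicit inequality in the finitely many discrete parameters ($\abs{S_0},\abs{\bar S_0}\in\{1,2\}$ and $e(G[S_0]),e(G[\bar S_0])\in\{0,1\}$) and in the integers $\abs{S},\abs{\bar S}\ge d+1$; a monotonicity analysis in $\abs{S}$ and $\abs{\bar S}$ reduces this to finitely many base cases. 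Equality is reached exactly for $\abs{S}=\abs{\bar S}=d+1$, $\abs{S_0}=\abs{\bar S_0}=2$, $e(G[S_0])=e(G[\bar S_0])=0$, that is, for the graph built from two disjoint copies of $K_{d+1}$ each missing one edge by adding two edges, each joining a degree-$(d-1)$ vertex of one copy to a degree-$(d-1)$ vertex of the other. This graph is $d$-regular, has $\kappa'=2$, and one computes $\lambda_2=c$, so the bound is best possible.

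I expect the real difficulty to be in making the final case analysis genuinely complete, rather than in any single estimate. Two points need care. First, one must use that the fixed cut is a \emph{minimum} cut, i.e.\ that $G$ is bridgeless: a configuration with $\abs{S_0}=1$ can violate $\det(cI-B)\ge 0$, but in such a configuration the cut separating $S\setminus S_0$ from the rest of $G$ has size $d-2$, which for small $d$ is itself a bridge (so the configuration cannot occur), and for larger $d$ must be handled by verifying the inequality there directly. Second, the parity conditions imposed by $d$-regularity, namely that $d\abs{S}-2$ and $d\abs{\bar S}-2$ be even, are essential: for odd $d$ the bare inequality $\det(cI-B)\ge 0$ can fail at values of $\abs{S},\abs{\bar S}$ that parity forbids. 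Once these are built in, the remaining computations are routine if somewhat lengthy.
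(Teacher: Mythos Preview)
The paper does not itself prove Theorem~\ref{Cioaba2}; it is quoted from Cioab{\v a}'s 2010 paper and serves only as background. So there is no ``paper's own proof'' of this particular statement to compare against. That said, the paper's proof of its main result (Theorem~\ref{main}, the case $t\ge 3$) is exactly the template Cioab{\v a}'s $t=2$ argument fits into, and your outline is close to it in spirit but diverges in the technical execution.

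Both your outline and the paper's method use the same four-cell partition $\{S\setminus S_0,\,S_0,\,\bar S_0,\,\bar S\setminus\bar S_0\}$ and quotient-matrix interlacing. Where you then compute $\det(cI-B)$ via a Schur-complement expansion and try to verify $\det(cI-B)\ge 0$ by monotonicity in $|S|,|\bar S|$ plus a finite case analysis in $|S_0|,|\bar S_0|,e(G[S_0]),e(G[\bar S_0])$, the paper instead (i) observes that with the cells ordered as above the quotient matrix is tridiagonal with constant row sums $d$, (ii) applies Theorem~\ref{DRG-BCN} to strip off the eigenvalue $d$ and pass to a $3\times 3$ matrix $\widetilde{Q_1}$ whose largest eigenvalue equals $\lambda_2(Q_1)$, (iii) uses Perron--Frobenius to push $|S|,|\bar S|$ down to $d+1$, and then (iv) shows the resulting largest root is non-increasing in $\alpha=|S_0|$ and $\beta=|\bar S_0|$, so one may take $\alpha=\beta=t$. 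This gives the bound with no case split at all: in particular the $|S_0|=1$ subcase you flag as delicate never has to be isolated, and the parity constraints are absorbed automatically because one is bounding $\lambda_1(\widetilde{Q_1})$ from below rather than evaluating a determinant at $c$.

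Your strategy is not wrong, and your reduction of the $\kappa'(G)=1$ case to Theorem~\ref{Cioaba1} via $c<\pi(d)$ is clean. But as you yourself anticipate, the determinant-sign route leaves a genuine residue of casework (the $|S_0|=1$ configurations for $d\ge 4$, and the parity-forbidden values of $|S|$ for odd $d$) that you have identified but not carried out. The tridiagonal-reduction/Perron--Frobenius approach buys you exactly the elimination of that casework; if you want to complete your version, you would need to actually execute the monotonicity and base-case checks you describe.
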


Note that the upper bounds for $\lambda_2(G)$ in Theorem~\ref{Cioaba1} and~\ref{Cioaba2} are sharp (see, for example~\cite{Cioaba 2010}) and that the first author~\cite{O-thesis} conjectured in his Ph.D thesis that it can be generalized for all $t\geq3$. In this paper, we settle down the conjecture (see, Theorem~\ref{main}) in a positive way.

\begin{conj}\label{O-conj}\cite{O-thesis}
Let  $\rho(d,t)=\begin{cases}
\frac{d-4+\sqrt{(d+4)^2-8t}}{2}& \text{ when } t \text{ is odd,}\\
\frac{d-3+\sqrt{(d+3)^2-8t}}{2}& \text{ when } t \text{ is even. }
\end{cases}$

For $t\geq3$, if $G$ is a $d$-regular simple graph such that $\lambda_2(G)<\rho(d,t)$, then $\kappa'(G)\geq t+1$.
\end{conj}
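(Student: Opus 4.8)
\medskip
\noindent\textbf{A plan for the proof.}
I would argue by contradiction. Since $\kappa'(H)\le\delta(H)$ for every graph and $\delta(G)=d$, the desired conclusion $\kappa'(G)\ge t+1$ can hold only when $t\le d-1$; so throughout I assume $3\le t\le d-1$. Suppose $\lambda_2(G)<\rho(d,t)$ but $\kappa'(G)\le t$, and set $s:=\kappa'(G)\le t\le d-1$. Fix a minimum edge cut $F$ with $|F|=s$ and let $G_1,G_2$ be the two components of $G-F$; by minimality both are connected. Write $m_i=|V(G_i)|$ with $m_1\le m_2$, and $n=m_1+m_2$. Counting edges at the vertices of $G_i$ gives $2e(G_i)=dm_i-s$, and $\Delta(G_i)\le d$. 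A first observation is that $m_1\ge d+1$: from $dm_1-s=2e(G_1)\le m_1(m_1-1)$ we get $s\ge m_1(d+1-m_1)$, and on $1\le m_1\le d$ the right-hand side is a concave function of $m_1$ equal to $d$ at both endpoints, hence $\ge d>d-1\ge t\ge s$, a contradiction.

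\medskip
Next I would dispose of all ``large side'' cases by eigenvalue interlacing. For the partition $\{V(G_1),V(G_2)\}$ the quotient matrix
\[
B=\begin{bmatrix} d-\dfrac{s}{m_1} & \dfrac{s}{m_1}\\[8pt] \dfrac{s}{m_2} & d-\dfrac{s}{m_2}\end{bmatrix}
\]
has constant row sums $d$, so its eigenvalues are $d$ and $d-s\bigl(\tfrac1{m_1}+\tfrac1{m_2}\bigr)$; interlacing of quotient matrices gives $\lambda_2(G)\ge\lambda_2(B)=d-s\bigl(\tfrac1{m_1}+\tfrac1{m_2}\bigr)\ge d-\tfrac{2t}{m_1}$. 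A short computation with the definition of $\rho$ shows that $d-\tfrac{2t}{m}\ge\rho(d,t)$ precisely when $m\ge\tfrac{2t}{d-\rho(d,t)}$, and that $\tfrac{2t}{d-\rho(d,t)}$ equals $\rho(d,t)+3$ for even $t$ and $\rho(d,t)+4$ for odd $t$; since $\rho(d,t)<d$ always, each of these thresholds is less than $d+4$. Hence $m_1\ge d+4$ already yields $\lambda_2(G)\ge\rho(d,t)$, contradicting the hypothesis. It remains to treat $m_1\in\{d+1,d+2,d+3\}$.

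\medskip
In this remaining range $G_1$ is nearly complete: every vertex of $G_1$ is non-adjacent to at most $m_1-1-d\le2$ others, and the vertices of $W_1:=V(G_1)\setminus U_1$ (those not incident to $F$) have $G_1$-degree exactly $d$ and hence are non-adjacent to exactly $m_1-1-d$ others. With $u_i=|U_i|$, $w_i=|W_i|$, the identities $2e(U_1)+e(U_1,W_1)=du_1-s$ and $2e(W_1)+e(U_1,W_1)=dw_1$ determine the internal edge-counts of $G_1$ up to a single parameter, and the analogue holds for $G_2$. If $m_2$ is also small (this, together with $m_1\le d+3$, is the genuinely hard situation), I would apply interlacing to the refined partition $\{U_1,W_1,U_2,W_2\}$ of $V(G)$ (splitting a part further where that helps control the entries): its quotient matrix $B'$ again has constant row sums $d$, so $\lambda_2(G)\ge\lambda_2(B')$, and the task becomes the algebraic one of showing $\lambda_2(B')\ge\rho(d,s)$, hence $\ge\rho(d,t)$. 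The parity dichotomy in $\rho$ enters here through the congruence $s\equiv dm_1\pmod2$ forced by $2e(G_1)=dm_1-s$ (so $s$ is even whenever $m_1=d+1$): the even formula emerges from configurations generalizing $G_1=K_{d+1}$ minus a matching, while the odd formula emerges from configurations generalizing $G_1=K_{d+2}$ minus the union of a $2$-factor on $U_1$ and a perfect matching on $W_1$. If instead $m_2$ is large, then $\lambda_1(G_2)$ is close to $d$, and I would use the two-dimensional test space spanned by a Perron-type vector of $G_1$ and a suitable vector supported on $V(G_2)$, both extended by zero; bounding the off-diagonal (cut) entry of the resulting $2\times2$ matrix by estimating the Perron weights of $G_1$ at the cut-incident vertices again yields $\lambda_2(G)\ge\rho(d,t)$.

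\medskip
The crux — and the step I expect to be by far the most work — is the analysis of $m_1\in\{d+1,d+2,d+3\}$: one must verify $\lambda_2(G)\ge\rho(d,t)$ for \emph{every} admissible local configuration of $G_1$, $G_2$, and the distribution of $F$ among their vertices, keeping the parity constraint on $s$ in view throughout. Although each configuration reduces to bounding the second eigenvalue of a quotient matrix of size at most $4\times4$ (or to a $2\times2$ estimate in the large-$m_2$ branch), the optimization over the free edge-count and cut-distribution parameters is delicate, and correctly identifying the extremal configurations — the ones attaining $\lambda_2(G)=\rho(d,t)$, which also witness the sharpness asserted in Theorems~\ref{Cioaba1} and \ref{Cioaba2} — is where the real difficulty lies.
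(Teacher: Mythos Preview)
Your outline has the right skeleton (contrapositive, the $2\times2$ quotient bound, the refined partition into cut-incident and non-cut-incident vertices on each side), but you organise the case analysis in a more laborious way than the paper, and the step you yourself call the ``crux'' is left as an unexecuted optimisation over configurations. Three structural choices in the paper's proof eliminate exactly the difficulties you anticipate.

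First, the paper splits on the cut size $r=|[S,\overline S]|$, not on $m_1$. When $r\le t-1$ the $2\times2$ quotient already gives $\lambda_2(G)\ge d-\tfrac{2(t-1)}{d+1}$, and a short inequality shows this exceeds $\rho(d,t)$ (in either parity). So only $r=t$ requires the fine analysis; you instead reduce to $m_1\in\{d{+}1,d{+}2,d{+}3\}$ and then split again on $m_2$, which multiplies the work and forces the vague ``large $m_2$'' test-space branch.

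Second---and this is the main device you are missing---for $r=t$ the paper invokes a tridiagonal eigenvalue lemma: if a nonnegative tridiagonal $A$ has all row sums $d$, then the $(n{-}1)\times(n{-}1)$ matrix $\widetilde A$ obtained via $a_i\mapsto d-b_{i-1}-c_i$ has eigenvalues $\lambda_2(A),\dots,\lambda_n(A)$. Applied to the $4\times4$ quotient $Q_1$ this yields a $3\times3$ matrix $\widetilde{Q_1}$ with $\lambda_1(\widetilde{Q_1})=\lambda_2(Q_1)$. A Perron--Frobenius comparison (replacing $s,s'$ by their minimum admissible values $d{+}1$, or $d{+}2$ when $t$ is odd, only lowers the diagonal of $\widetilde{Q_1}$ and hence its top eigenvalue) then handles \emph{all} sizes $m_1,m_2$ uniformly; your separate large-$m_2$ argument becomes unnecessary.

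Third, once $s=s'=d{+}1$ (resp.\ $d{+}2$) the diagonal entries of $\widetilde{Q'_1}$ simplify to constants $-1$ (resp.\ $-2$), the characteristic polynomial splits off a linear factor, and the paper proves by direct calculation that the remaining largest root is a non-increasing function of the parameters $\alpha=|U_1|$ and $\beta=|U_2|$; setting $\alpha=\beta=t$ then yields exactly $\rho(d,t)$. (For odd $t$ with $s=d{+}2$ the internal edge count $k$ is not uniquely determined, so an extra parameter $A\in[0,1]$ appears; a sign computation on the characteristic polynomial at $x=d-2$ shows the largest root is minimised at $A=B=0$.) Thus your proposed case-by-case ``optimisation over the free edge-count and cut-distribution parameters'' is replaced by two monotonicity arguments, and no enumeration of local configurations is needed.
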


The first author~\cite{O-Edge-conn from eigenvalues} proved sharp upper bounds for $\lambda_2(G)$ in a $d$-regular multigraph. With Abiad, Brimkov, Martinez-Rivera, and Zhang, the first author~\cite{ABMOZ} also gave an upper bound for $\lambda_2(G)$ in a $d$-regular multigraph with given order.

This paper is organized as follows: after Introduction, in Section 2, we describe examples of graphs which assure that the bounds for $\lambda_2$ in Conjecture~\ref{O-conj} are sharp. In Section 3, we give a proof of Conjecture~\ref{O-conj}.

For underfined terms, see West~\cite{west} or Godsil and Rolye~\cite{GR}.

\section{Construction}
In this section, we provide $d$-regular simple graphs $G_{d,t}$ with $\lambda_2(G_{d,t})=\rho(d,t)$ and with $\kappa'(G_{d,t})=t$, where $$\rho(d,t)=\begin{cases}
\frac{d-4+\sqrt{(d+4)^2-8t}}{2}& \text{ if } t \text{ is odd, }\\
\frac{d-3+\sqrt{(d+3)^2-8t}}{2}& \text{ if } t \text{ is even.}
\end{cases}$$
Those graphs show that the bounds for $\lambda_2(G)$ in Conjecture~\ref{O-conj} are sharp if the conjecture is true. 
Note that for a $d$-regular graph $G$ and odd $t$, if $\kappa'(G)=t$, then $d$ must be odd by the Degree-Sum formula.

We denote the \emph{complete graph} and the \emph{cycle} on $n$ vertices by $K_n$ and $C_n$, respectively.
The join of two graphs $G$ and $H$, written $G \vee H$, is the graph obtained from $G$ and $H$ by joining the vertices of $G$ and $H$. The complement of a simple graph $G$, denoted $\overline{G}$, is the graph with the vertex set $V(G)$ defined by $uv \in E(\overline{G})$ if and only if $uv \notin E(G)$.

Suppose that $d$ and $t$ are positive integers such that $3 \le t \le d-1$. 
Let $$H_{d,t}=\begin{cases}
{\overline{\frac{d+2-t}{2}K_2}} \vee \overline{C_t} & \text{ if } t \text{ is odd, }\\
K_{d+1-t} \vee {\overline{\frac{t}{2} K_2}} & \text{ if } t \text{ is even. }
\end{cases}$$

Let $G_{d,t}$ be the graph obtained from two copies of $H_{d,t}$ by adding $t$ edges
so that the resulting graph is $d$-regular. We first show that $\kappa'(G_{d,t})=t$ by using Proposition~\ref{prop}. 

For $S, T \subseteq V(G)$, we denote by $[S,T]$ the set of edges joining $S$ and $T$.
\begin{prop}\label{prop}
 If $G$ is an $n$-vertex connected graph such that $\frac n2  \le \delta(G) \le n$, then $\kappa'(G) = \delta(G)$, where $\delta(G)$ is the minimum degree of $G$.
\end{prop}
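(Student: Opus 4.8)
The plan is to establish the two inequalities $\kappa'(G)\le\delta(G)$ and $\kappa'(G)\ge\delta(G)$ separately. The first is routine and needs no hypothesis beyond connectedness: since $\delta(G)\ge n/2>0$ forces $n\ge 2$, deleting the $\delta(G)$ edges incident to a vertex of minimum degree disconnects the connected graph $G$, so $\kappa'(G)\le\delta(G)$.

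For the reverse inequality I would use the standard description $\kappa'(G)=\min|[S,T]|$, where the minimum runs over all partitions $V(G)=S\cup T$ into nonempty sets (a minimal disconnecting edge set is always of this form). Fix such a partition and, relabelling if necessary, assume $|S|\le|T|$; writing $s=|S|$ we then have $1\le s\le n/2\le\delta(G)$. The key step is the edge count
$|[S,T]|=\sum_{v\in S}\deg_G(v)-2\,e(G[S])\ge s\,\delta(G)-2\binom{s}{2}=s\bigl(\delta(G)-s+1\bigr)$,
where I used that $G$ is simple to bound $e(G[S])\le\binom{s}{2}$.

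It then remains to observe that $g(s):=s(\delta(G)-s+1)\ge\delta(G)$ for $1\le s\le\delta(G)$: the function $g$ is a concave quadratic in $s$ with $g(1)=g(\delta(G))=\delta(G)$, hence $g\ge\delta(G)$ on the whole interval, and in particular at $s=|S|$ since $1\le|S|\le n/2\le\delta(G)$. Thus $|[S,T]|\ge\delta(G)$ for every partition, giving $\kappa'(G)\ge\delta(G)$, and combined with the first paragraph this yields $\kappa'(G)=\delta(G)$. There is no serious obstacle here; the two points to keep in mind are that the argument genuinely uses simplicity (through $e(G[S])\le\binom{s}{2}$, and indeed the statement is false for multigraphs), and that the hypothesis $\delta(G)\ge n/2$ is invoked precisely to place $s=|S|$ in the range $[1,\delta(G)]$ on which the quadratic $g$ does not drop below its endpoint value $\delta(G)$.
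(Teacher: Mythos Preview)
Your proof is correct and follows essentially the same route as the paper: both hinge on the estimate $|[S,\overline{S}]|\ge |S|\bigl(\delta(G)+1-|S|\bigr)\ge\delta(G)$ for $|S|\le n/2$. The paper's version is extremely terse and omits the easy inequality $\kappa'(G)\le\delta(G)$ as well as the justification of the last step via concavity of $s\mapsto s(\delta(G)-s+1)$, but the substance is the same.
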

\begin{proof} Consider an edge-cut $[S, \overline{S}]$ for $S\subseteq V(G)$ such that $|S| \le \frac n2$. Then $$|[S, \overline{S}]| \ge |S|(\delta(G) + 1 - |S|) \ge \delta(G).$$
\end{proof} 
\begin{thm}
For $3 \le t \le d-1$, we have $\kappa'(G_{d,t})=t$.
\end{thm}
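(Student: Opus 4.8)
The plan is to verify directly that $G_{d,t}$ is connected with minimum degree (indeed, every degree equal to) $d$, compute the order $n$ of each half $H_{d,t}$, and then invoke Proposition~\ref{prop} on each copy of $H_{d,t}$ together with a lower bound on the edge-cut separating the two copies. First I would establish that $H_{d,t}$ has exactly $n_0 := d+2-t + t = d+2$ vertices when $t$ is odd (from $\frac{d+2-t}{2}$ copies of $K_2$ giving $d+2-t$ vertices, joined to $\overline{C_t}$ on $t$ vertices), and $n_0 := (d+1-t) + t = d+1$ vertices when $t$ is even. In either case $n_0 \le d+2$, so in particular $\delta(H_{d,t}) \ge \frac{n_0}{2}$ should hold once we check the degrees inside $H_{d,t}$: a vertex coming from the $\overline{K_2}$-part has degree $(n_0 - 2) + (\text{something from the join})$, a vertex from the $\overline{C_t}$ or $K_{d+1-t}$ part has its own internal degree plus the join. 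The point is that inside $H_{d,t}$ every vertex has degree $d$ except for exactly $t$ vertices (those that will receive the $t$ cross edges), which have degree $d-1$; after adding the $t$ edges between the two copies the whole graph $G_{d,t}$ becomes $d$-regular and connected, with $2n_0$ vertices.

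Next I would show $\kappa'(G_{d,t}) \le t$: the edge set of size $t$ joining the two copies of $H_{d,t}$ is a disconnecting set, so $\kappa'(G_{d,t}) \le t$. The substantive direction is $\kappa'(G_{d,t}) \ge t$. Take any edge-cut $[S,\overline{S}]$ with $1 \le |S| \le n := |V(G_{d,t})| = 2n_0$; I want $|[S,\overline{S}]| \ge t$. Split into cases according to how $S$ meets the two copies $H^{(1)}, H^{(2)}$ of $H_{d,t}$. If $S$ is contained in one copy, say $S \subseteq V(H^{(1)})$, then because $H^{(1)}$ has $n_0 \le d+2$ vertices and minimum degree $\ge d \ge n_0 - 2$, one checks (as in Proposition~\ref{prop}, refined to account for the at most two non-neighbours a vertex can have) that $|[S, V(H^{(1)})\setminus S]| \ge t$ already, unless $S$ is very small or very large; the genuinely small or large cases ($|S| \le$ a small constant, or its complement within $H^{(1)}$ small) are handled by a direct degree count, using that $S$ or its complement contains a vertex all of whose $d$ neighbours lie outside. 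The remaining case is when $S$ meets both copies; write $S_i = S \cap V(H^{(i)})$. Then $[S,\overline S]$ contains the edges $[S_1, V(H^{(1)})\setminus S_1]$ inside the first copy and $[S_2, V(H^{(2)})\setminus S_2]$ inside the second, plus some of the $t$ cross edges, and applying the single-copy bound to each $S_i$ (when nonempty and a proper subset) gives at least $t$ in total.

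I expect the main obstacle to be the bookkeeping in the single-copy case: Proposition~\ref{prop} as stated gives $|[S,\overline S]| \ge \delta$ only when $\delta \ge n/2$ for the \emph{ambient} graph, but within $H^{(1)}$ the minimum degree is $d-1$ while the order is $d+1$ or $d+2$, so the hypothesis of Proposition~\ref{prop} is met with room to spare only for the internal part; the $t$ low-degree vertices and the structure of $\overline{C_t}$ (each vertex non-adjacent to its two cycle-neighbours) must be tracked so that the estimate $|S|(\delta + 1 - |S|)$ is replaced by the correct $|S|(d+1-|S|) - (\text{edges lost to non-adjacencies})$ and still comes out $\ge t$. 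Once the inequality is set up with the exact counts — number of $\overline{K_2}$ pairs, the $t$-cycle complement structure, and which $t$ vertices carry the cross edges — the conclusion $\kappa'(G_{d,t}) = t$ follows by combining the upper and lower bounds. In writing this up I would isolate the single-copy estimate as a short lemma and then let the two-copy case fall out immediately.
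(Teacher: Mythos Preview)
Your approach is essentially the paper's: both establish $\kappa'(H_{d,t}) \ge t$ via Proposition~\ref{prop} and then observe that any edge-cut of $G_{d,t}$ either separates the two copies (size exactly $t$) or properly splits one copy (size at least $\kappa'(H_{d,t})$). The bookkeeping obstacle you anticipate is not there: since $\delta(H_{d,t}) = d-1$ and $|V(H_{d,t})| \in \{d+1, d+2\}$, the hypothesis $\delta \ge n_0/2$ of Proposition~\ref{prop} holds for $H_{d,t}$ itself, so the proposition applies directly and yields $\kappa'(H_{d,t}) = d-1 \ge t$ in one line, with no need to track the $\overline{C_t}$ non-adjacencies or the low-degree vertices separately.
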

\begin{proof} We prove the theorem for odd $t$. The proof of the other case is similar and is omitted.
	
	By the construction of $G_{d,t}$, there exist $t$ edges between two copies of $H_{d,t}$, which is a subgraph of $G_{d,t}$. Thus we have $\kappa'(G_{d,t}) \le t$.

By Proposition~\ref{prop}, we have $\kappa'(H_{d,t})=d-1 \ge t$. Since there are $t$ edges between two copies of $H_{d,t}$ in $G_{d,t}$,  for any pair of two vertices in $V(G_{d,t})$, there exists at least $t$ edge-disjoint paths between them. Thus we have the desired result.
\end{proof}
Now, we determine the second largest eigenvalue of $G_{d,t}$. Before determining it,
we introduce an important tool, which is called ``eigenvalue interlacing''. 

For an $n \times n$ matrix $A$, $B$ is a {\it principal submatrix of $A$} if $B$ is a square matrix obtained by removing the same set of rows and columns of $A$. Given two sequences of real numbers $a_1 \ge \cdots \ge a_n$ and $b_1 \ge \cdots \ge b_m$ with $m < n$, we say that the second sequence {\it interlaces} the first sequence whenever $a_i \ge b_i \ge a_{n-m+i}$ for $i = 1, \ldots , m.$

\begin{thm}{\rm \cite[Interlacing Theorem]{BH}}\label{eig_interlace} 
	If $A$ is a real symmetric $n\times n$ matrix and $B$ is a principal submatrix of $A$ of order $m \times m$ with $m < n$, then for $1 \le i \le m$, $\lambda_i(A) \ge \lambda_i(B) \ge \lambda_{n-m+i}(A)$, i.e., the eigenvalues of $B$ interlace the eigenvalues of $A$.
\end{thm}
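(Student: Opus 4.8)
The statement is the classical Cauchy interlacing theorem, so I would derive it from the Courant--Fischer min--max characterization of eigenvalues. Recall that for a real symmetric $n\times n$ matrix $A$ with $\lambda_1(A)\ge\cdots\ge\lambda_n(A)$ and for each $i$,
$$\lambda_i(A)=\max_{\dim U=i}\ \min_{0\neq x\in U}\frac{x^{\top}Ax}{x^{\top}x}=\min_{\dim U=n-i+1}\ \max_{0\neq x\in U}\frac{x^{\top}Ax}{x^{\top}x},$$
where $U$ ranges over subspaces of $\mathbb{R}^n$. Since $B$ is the principal submatrix of $A$ obtained by keeping the rows and columns in some index set $J$ with $|J|=m$, write $B=S^{\top}AS$, where $S$ is the $n\times m$ matrix whose columns are the standard basis vectors $e_j$ with $j\in J$. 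Then $S^{\top}S=I_m$, the map $y\mapsto Sy$ is injective, and for every nonzero $y\in\mathbb{R}^m$ the Rayleigh quotients agree: $\tfrac{y^{\top}By}{y^{\top}y}=\tfrac{(Sy)^{\top}A(Sy)}{(Sy)^{\top}(Sy)}$.

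To prove $\lambda_i(B)\le\lambda_i(A)$, I would pick an $i$-dimensional subspace $U\subseteq\mathbb{R}^m$ attaining the outer maximum in the Courant--Fischer formula for $B$. Its image $SU$ is an $i$-dimensional subspace of $\mathbb{R}^n$ by injectivity of $S$, and the agreement of Rayleigh quotients gives $\min_{0\neq x\in SU}\tfrac{x^{\top}Ax}{x^{\top}x}=\lambda_i(B)$; hence the maximum over all $i$-dimensional subspaces, which is $\lambda_i(A)$, is at least $\lambda_i(B)$. For the other inequality $\lambda_i(B)\ge\lambda_{n-m+i}(A)$, I would simply apply the bound just obtained to $-A$ and its principal submatrix $-B$: using $\lambda_k(-A)=-\lambda_{n+1-k}(A)$ and $\lambda_k(-B)=-\lambda_{m+1-k}(B)$, the inequality $\lambda_i(-A)\ge\lambda_i(-B)$ rearranges to $\lambda_{m+1-i}(B)\ge\lambda_{n+1-i}(A)$, and substituting $i\mapsto m+1-i$ yields exactly $\lambda_i(B)\ge\lambda_{n-m+i}(A)$. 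Equivalently, one can run the same subspace argument with the dual min--max formula.

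There is no genuine obstacle here: this is a standard result whose proof is pure linear algebra. The only points needing care are stating Courant--Fischer in the precise form used and the index bookkeeping in passing from the first inequality to the second; once the embedding $S$ is fixed, everything else is mechanical. Since the theorem is quoted from \cite{BH}, it would also be legitimate to omit the proof entirely and refer the reader there.
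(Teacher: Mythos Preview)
Your proof is correct and is the standard Courant--Fischer argument; the paper itself does not prove this theorem at all but simply cites it from \cite{BH}, exactly as you anticipated in your final sentence. There is nothing to compare: the paper treats it as a quoted black box, and your sketch reproduces the textbook proof one would find in the cited reference.
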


Let $P = \{V_1, \ldots, V_s\}$ be a partition of the vertex set of a multigraph $G$
into $s$ non-empty subsets. The {\it quotient matrix $Q$} corresponding to $P$ is the $s\times s$ matrix whose entry $Q_{i,j} (1 \le i, j \le s)$ is the average number of incident edges in $V_j$ of the vertices in $V_i$. More precisely, $Q_{i,j} = \frac{|[V_i,V_j]|}{|V_i|}$ if $i \neq j$, and $Q_{i,i} = \frac{2|E(G[Vi])|}{|Vi|}$. Note that for a simple graph, $Q_{i,j}$ is just the average number of neighbors in $V_j$ of the vertices in $V_i$.

\begin{cor}{\rm \cite[Quotient Interlacing Theorem]{BH}}\label{quotient_eig_interlace}
	The eigenvalues of the quotient matrix interlace the eigenvalues of $G$.
\end{cor}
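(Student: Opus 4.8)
The plan is to deduce the corollary from Theorem~\ref{eig_interlace} by exhibiting a symmetric matrix with the same eigenvalues as the quotient matrix $Q$ which occurs as a principal submatrix of a matrix similar to $A(G)$. Write $n = |V(G)|$. First I would normalize the parts of $P$: for each $i$ set $u_i = \frac{1}{\sqrt{|V_i|}}\,\mathbf{1}_{V_i} \in \mathbb{R}^{V(G)}$, the scaled indicator vector of $V_i$. Since the $V_i$ are disjoint and nonempty, $u_1,\ldots,u_s$ are orthonormal. Collect them as the columns of an $n \times s$ matrix $U$ and set $\hat Q = U^{\top} A(G)\, U$. A direct computation of $(\hat Q)_{ij} = u_i^{\top} A(G)\, u_j$ gives $(\hat Q)_{ij} = \frac{|[V_i,V_j]|}{\sqrt{|V_i|\,|V_j|}}$ for $i \neq j$ and $(\hat Q)_{ii} = \frac{2|E(G[V_i])|}{|V_i|}$; in particular $\hat Q$ is symmetric.

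Next I would compare $\hat Q$ with $Q$. Writing $D = \mathrm{diag}(|V_1|,\ldots,|V_s|)$, the entry formulas show $\hat Q = D^{1/2} Q D^{-1/2}$, so $Q$ and $\hat Q$ are similar and hence share the same eigenvalues. Then extend the orthonormal set $u_1,\ldots,u_s$ to an orthonormal basis $u_1,\ldots,u_n$ of $\mathbb{R}^{V(G)}$ and let $\hat U$ be the orthogonal $n\times n$ matrix with these columns. Then $\hat U^{\top} A(G)\,\hat U$ is symmetric and, being similar to $A(G)$, has the same eigenvalues as $A(G)$; moreover $\hat Q = U^{\top} A(G)\, U$ is precisely its leading $s\times s$ principal submatrix. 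Applying Theorem~\ref{eig_interlace} to the pair $\bigl(\hat U^{\top} A(G)\,\hat U,\ \hat Q\bigr)$ yields that the eigenvalues of $\hat Q$ interlace those of $A(G)$, and since $\hat Q$ and $Q$ have the same spectrum, the eigenvalues of $Q$ interlace the eigenvalues of $G$.

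I expect the only care-points to be the entrywise bookkeeping in computing $U^{\top} A(G)\, U$ (matching the ``average number of incident edges'' definition of $Q$, including the diagonal convention $Q_{ii} = \frac{2|E(G[V_i])|}{|V_i|}$) and the verification $\hat Q = D^{1/2} Q D^{-1/2}$; the interlacing conclusion itself is immediate once $\hat Q$ is displayed as a principal submatrix of an orthogonal conjugate of $A(G)$. There is no genuine obstacle here: the content is entirely the reduction to the already-cited Interlacing Theorem.
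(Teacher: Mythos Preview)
Your argument is correct and is exactly the standard reduction to Cauchy interlacing found in the cited reference~\cite{BH}; the paper itself does not supply a proof of this corollary but simply quotes it, so there is nothing further to compare.
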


A partition $P$ is {\it equitable} if for each $1 \le i, j \le s$, any vertex $v \in V_i$ has exactly $Q_{i,j}$ neighbors in $V_j$. In this case, the eigenvalues of the quotient matrix are eigenvalues of $G$ and the spectral radius of the quotient matrix equals the spectral radius of $G$ (see\cite{BH,GR} for more details).

\begin{thm}\label{ex_bound}
For $3 \le t \le d-2$, we have $$\lambda_2(G_{d,t})=\begin{cases}
\frac{d-4+\sqrt{(d+4)^2-8t}}{2}& \text{ if } t \text{ is odd, }\\
\frac{d-3+\sqrt{(d+3)^2-8t}}{2}& \text{ if } t \text{ is even.}
\end{cases}$$
\end{thm}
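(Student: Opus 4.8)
The plan is to compute the eigenvalues of $G_{d,t}$ via an equitable partition and then verify that the claimed value is the \emph{second} largest one. First I would analyze the building block $H_{d,t}$. For odd $t$, $H_{d,t} = \overline{\frac{d+2-t}{2}K_2} \vee \overline{C_t}$ has a natural equitable partition into the two parts $A$ (the $d+2-t$ vertices of $\overline{\frac{d+2-t}{2}K_2}$) and $B$ (the $t$ vertices of $\overline{C_t}$); since $|A| = d+2-t$ is even and the join makes $H_{d,t}$ connected and $(d-1)$-regular, one records the spectrum of $H_{d,t}$ explicitly: the eigenvalues coming from within-part eigenvectors that vanish on the quotient ($-1$ with high multiplicity from $\overline{K_2}$ copies, and the $\overline{C_t}$ eigenvalues $-1-2\cos(2\pi j/t)$ for $j\neq 0$, each shifted appropriately by the join), plus the two eigenvalues of the $2\times 2$ quotient matrix. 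The even-$t$ case with $H_{d,t}=K_{d+1-t}\vee \overline{\frac t2 K_2}$ is handled the same way. The key point to extract is that $\lambda_1(H_{d,t}) = d-1$ and that $\lambda_2(H_{d,t})$ is strictly smaller and is in fact at most the claimed $\rho(d,t)$ value — this is where the quotient matrix eigenvalue computation does the real work.

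Next I would pass from $H_{d,t}$ to $G_{d,t}$. The graph $G_{d,t}$ consists of two copies of $H_{d,t}$ joined by a perfect "matching-like" set of $t$ edges chosen to make everything $d$-regular; the $t$ added edges are incident (in each copy) to the $t$ vertices forming the $\overline{C_t}$ part (odd case) or the $\overline{\frac t2 K_2}$ part (even case), raising their degree from $d-1$ to $d$. This gives $G_{d,t}$ an equitable partition with four cells: $A$, $B$ in the first copy and $A'$, $B'$ in the second, with $Q$ a $4\times 4$ matrix. By the symmetry swapping the two copies, $Q$ block-decomposes into a symmetric combination and an antisymmetric combination; the symmetric part has Perron eigenvalue $d$ (with the all-ones vector), and the antisymmetric part's two eigenvalues, together with the "interior" eigenvalues inherited from the two copies of $H_{d,t}$ (which are exactly the non-quotient eigenvalues of $H_{d,t}$, appearing with doubled multiplicity), constitute the rest of the spectrum. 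So $\lambda_2(G_{d,t})$ is the maximum of (i) the larger eigenvalue of the $2\times 2$ antisymmetric block and (ii) $\lambda_2(H_{d,t})$. A direct computation of the antisymmetric block — whose entries differ from the $H_{d,t}$ quotient only by the $\pm 1$ contribution of the $t$ cross edges spread over the relevant $t$ vertices — should yield precisely $\rho(d,t)$, and one then checks it dominates (ii).

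The main obstacle I expect is bookkeeping the cross-edge contribution correctly and confirming that the resulting quadratic's larger root equals $\rho(d,t)$ exactly, i.e.\ that $2\lambda^2 - 2(d-4)\lambda - \dots = 0$ (odd case) or $2\lambda^2-2(d-3)\lambda-\dots=0$ (even case) with the right constant term $-2t$ hidden inside. Concretely, the antisymmetric block for odd $t$ should come out to $\begin{bmatrix} -1 & d+1-t \\ 1 & t-2 - 1 \end{bmatrix}$-type shape (after accounting for the $-1$ diagonal from $\overline{K_2}$, the off-diagonal join contributions $\frac{t}{|A|}|A| = t$ and $d+1-t$, the $\overline{C_t}$ quotient diagonal $t-3$, and the $-1$ per-vertex shift from the $t$ cross edges on the $B$-cell), and its characteristic polynomial is $\lambda^2 - (d-4)\lambda - (2t - \text{something})$; I would carefully pin down that constant. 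The secondary obstacle is ruling out that some interior eigenvalue of $H_{d,t}$ — in particular the largest $\overline{C_t}$-derived eigenvalue $-1 - 2\cos(2\pi/t)$, which approaches $1$ as $t$ grows — ever exceeds $\rho(d,t)$; since $\rho(d,t) \ge \rho(d,d-2) $ grows roughly like $\sqrt{d}$ while the interior eigenvalues are $O(1)$, this is true for all $d \ge t+2$, but the inequality $-1-2\cos(2\pi/t) < \rho(d,t)$ and the analogous bound for $\lambda_2(H_{d,t})$ should be stated and verified cleanly, which is the only remaining routine-but-necessary step. Finally, the hypothesis $t \le d-2$ (rather than $t\le d-1$) is exactly what guarantees the $A$-cell is nonempty in a way that keeps $\rho(d,t)$ as the true second eigenvalue, so I would make sure the edge case $t = d-1$ is correctly excluded.
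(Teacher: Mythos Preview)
Your approach is essentially the paper's: both use the four-cell equitable partition $\{A,B,C,D\}$ of $G_{d,t}$ and compute the $4\times 4$ quotient matrix (you additionally exploit the swap symmetry to split it into two $2\times 2$ blocks, which is a clean way to recover the quotient eigenvalues $d,\ -2,\ \tfrac{d-4\pm\sqrt{(d+4)^2-8t}}{2}$).

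One correction is needed, however. The non-quotient eigenvalues of $G_{d,t}$ are \emph{not} simply the non-quotient eigenvalues of $H_{d,t}$ with doubled multiplicity. The $A$-supported eigenvectors (eigenvalues $0$ and $-2$) do transfer unchanged, since the $A$-vertices are untouched by the cross edges. But a nontrivial $\overline{C_t}$-eigenvector $\mathbf{y}$ with eigenvalue $\lambda'$ yields $G_{d,t}$-eigenvectors $(\mathbf{0},\mathbf{y},\pm\mathbf{y},\mathbf{0})$ with eigenvalues $\lambda'\pm 1$, because each $B$-vertex is joined by one cross edge to its partner in $C$. Since the nontrivial eigenvalues of $\overline{C_t}$ lie in $(-3,1)$, these contributions lie in $(-4,2)$, which is still below $\rho(d,t)$ for $t\le d-2$, so your conclusion survives once this shift is accounted for. (Relatedly, $H_{d,t}$ is not $(d-1)$-regular: the $A$-vertices already have degree $d$; only the $B$-vertices have degree $d-1$, so your statement $\lambda_1(H_{d,t})=d-1$ is off, though this side remark is not actually used.) Your tentative antisymmetric block entries are also not quite right; the correct block is $\begin{pmatrix} d-t & t \\ d+2-t & t-4\end{pmatrix}$, whose characteristic polynomial $\lambda^2-(d-4)\lambda+(2t-4d)$ gives exactly $\rho(d,t)$.
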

\begin{proof}We prove the theorem for odd $t$. The proof of the other case is similar and is omitted.
	
Let $A$ and $D$ be the two copies of $\overline{\frac{d+2 - t}{2}K_2}$  and let $B$ and  $C$ be the two copies of $\overline{C_t}$ in $G_{d,t}$. 
Note that the spectrum of $A$ is $\{(d-t)^1, 0 ^{s_1}, (-2)^{s_2}
\}$ for some $s_1$ and $s_2$ with $s_1+s_2=d+1-t$, and that the spectrum of $B$ is included in the interval $[-3, t-3]$
since if $G$ is a $d$-regular graph, then the complement of $G$ is $(n-1-d)$-regular, and
the remaining $n -1$ eigenvalues of $\overline{G}$ are $-1 - \lambda(G)$, where $\lambda(G)$ runs through the $n - 1$ eigenvalues of $G$ belonging to an eigenvector orthogonal to the all 1's vector.

For a nontrivial engenvalue $\lambda$ of $A$ with the eigenvector $\mathbf{x}^T$, $(\mathbf{x}, \mathbf{0}, \mathbf{0}, \mathbf{0})^T$ and $(\mathbf{0}, \mathbf{0}, \mathbf{0}, \mathbf{x})^T$ are eigenvectors of $G_{d,t}$ with eigenvalue $\lambda$. And for a nontrivial eigenvalue $\lambda'$ of the induced subgraph on $B$ with eigenvector $\mathbf{y}^T$, $(\mathbf{0}, \mathbf{y}, \mathbf{y}, \mathbf{0})^T$ and $(\mathbf{0}, \mathbf{y}, -\mathbf{y}, \mathbf{0})^T$ are eigenvectors of $G_{d,t}$ with eigenvalue $\lambda' + 1$ and $\lambda' - 1$, respectively. Note that $-4 \leq \lambda' \pm 1 \leq 2$.

In order to calculate remaing 4 eigenvalues of $G_{d,t}$, we consider the quotient matrix with respect to $\{V(A), V(B), V(C), V(D)\}$, that is $$\begin{pmatrix} d-t & t & 0 & 0 \\ d+2-t & t-3 & 1 & 0 \\ 0 & 1 & t-3 & d+2 - t \\ 0& 0& t & d-t \end{pmatrix}.$$

Note that the vertex partition $\{V(A), V(B), V(C), V(D)\}$ is equitable. Thus the eigenvalues of the quotient matrix are the eigenvalues of $G$.  Since $d,$ $-2$, and $\frac{d-4 \pm \sqrt{(d+4)^2 - 8t}}{2}$ are the eigenvalues of $Q$, we conclude that $\lambda_2(G_{d,t})= \frac{d-4 + \sqrt{(d+4)^2 - 8t}}{2}$.
\end{proof}

\section{Main Results} 
\label{sec:main_results}
In this section, we show that Conjecture~\ref{O-conj} is true by using Corollary~\ref{quotient_eig_interlace} and Theorem~\ref{DRG-BCN}.
  
Let $M_n(F)$ be the set of all $n$ by $n$ matrices over a field $F$.
A matrix $A=[a_{ij}]\in M_n(F)$ is {\it tridiagonal} if 
$a_{ij}=0$, whenever $|i-j|>1$.

\begin{thm}\cite{BCN,horn}\label{DRG-BCN}
	Let $A$ be a non-negative tridiagonal matrix as follows:
	\[A=\left(
		\begin{array}{ccccc}
			a_0 & b_0    &         &         & 0       \\
			c_1 & a_1    & b_1     &         &         \\
			    & \ddots & \ddots  & \ddots  &         \\
	    		& 		 & \ddots  & \ddots  & b_{n-1} \\
			0   &        & 	       & c_n     & a_n
		\end{array} \right)\]
	Assume that each row sum of $A$ equals $d$.
	If $A$ has eigenvalues $\lambda_1, \ldots, \lambda_n$ indexed in non-increasing order, 
	then the $(n-1)\times (n-1)$ matrix 
	\[\widetilde{A}=\left(
		\begin{array}{ccccc}
			d-b_0-c_1 & b_1       &         &         & 0       \\
			c_1       & d-b_1-c_2 & b_2     &         &         \\
			          & c_2       & \ddots  & \ddots  &         \\
			          &           & \ddots  & \ddots  & b_{n-1} \\
			0         & 		  &         & c_{n-1} & d-b_{n-1}-c_{n}
		\end{array} \right)\]
	has eigenvalues $\lambda_2, \lambda_3,\ldots, \lambda_n$.
\end{thm}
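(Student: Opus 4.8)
The plan is to prove the sharper statement that the characteristic polynomial of $A$ factors as $\det(xI-A) = (x-d)\det(xI-\widetilde A)$. Granting this, the eigenvalue claim is immediate: $A$ is nonnegative with all row sums equal to $d$, so every eigenvalue of $A$ has modulus at most $d$, while $A\mathbf 1 = d\mathbf 1$; hence $\lambda_1 = d$, and the factorization shows that the eigenvalues of $\widetilde A$ are exactly the eigenvalues of $A$ other than $\lambda_1$, namely $\lambda_2,\lambda_3,\dots,\lambda_n$.

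To establish the factorization I would argue with leading principal minors and three-term recurrences. Put $y = x-d$ and adopt the boundary conventions $c_0 = b_n = 0$, so that the row-sum hypothesis becomes $a_k = d - b_k - c_k$ for every $k$. Let $p_k(x)$ be the determinant of $xI$ minus the top-left $k\times k$ submatrix of $A$, with $p_0 = 1$ and $p_{-1} = 0$, so that $p_{n+1} = \det(xI-A)$; define $q_k(x)$ in the same way from $\widetilde A$, with $q_0 = 1$, $q_{-1} = 0$ and $q_n = \det(xI-\widetilde A)$. Expanding each tridiagonal determinant along its last row and column gives the recurrences
\[
p_{k+1} = (y + b_k + c_k)\,p_k - b_{k-1}c_k\,p_{k-1}, \qquad
q_{k+1} = (y + b_k + c_{k+1})\,q_k - b_k c_k\,q_{k-1}.
\]

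The heart of the proof is the auxiliary identity $p_k = q_k - c_k\,q_{k-1}$ for $0 \le k \le n$. To prove it, set $r_k := q_k - c_k q_{k-1}$; one checks $r_0 = p_0$ and $r_1 = p_1$ directly, and then — substituting this definition and using the $q$-recurrence to eliminate the $q_{k-2}$ term — verifies that $r_k$ obeys the same recurrence $r_{k+1} = (y + b_k + c_k)\,r_k - b_{k-1}c_k\,r_{k-1}$ that $p_k$ does, so that $r_k = p_k$ for all $k \le n$. Feeding $p_n = q_n - c_n q_{n-1}$ and $p_{n-1} = q_{n-1} - c_{n-1}q_{n-2}$ into the last recurrence $p_{n+1} = (y + c_n)\,p_n - b_{n-1}c_n\,p_{n-1}$ and collapsing the right-hand side with one further use of the $q$-recurrence then yields $p_{n+1} = y\,q_n = (x-d)\det(xI-\widetilde A)$, which finishes the proof.

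The only genuine difficulty is the combinatorial bookkeeping: one has to keep straight the index shift between the off-diagonal product $b_{k-1}c_k$ in the recurrence for $p$ and the product $b_k c_k$ in the recurrence for $q$, and the change of the diagonal entry from $a_k$ to $d - b_{k-1} - c_k$ — and, above all, one must guess the correct auxiliary identity $p_k = q_k - c_k q_{k-1}$; once that is in hand, verifying it and deducing the factorization are routine algebra. A more conceptual alternative would be to exhibit an explicit invertible matrix conjugating $A$ into block-triangular form with diagonal blocks $d$ and a matrix similar to $\widetilde A$, built from the all-ones vector together with the difference vectors $e_{i-1}-e_i$; but that route reduces to the same kind of computation, so I would carry out the minor-recurrence version.
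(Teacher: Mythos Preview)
The paper does not prove this theorem at all: it is quoted as a known result from \cite{BCN} and \cite{horn} and used as a black box in the proof of Theorem~\ref{main}. So there is nothing in the paper to compare your argument against.

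Your proof is correct and self-contained. The factorization $\det(xI-A)=(x-d)\det(xI-\widetilde A)$ does exactly what is needed, and your verification via the three-term recurrences checks out: the auxiliary identity $p_k=q_k-c_kq_{k-1}$ holds at $k=0,1$, propagates because both sides satisfy the $p$-recurrence (your reduction using the $q$-recurrence to eliminate $q_{k-2}$ is right), and the final step with $b_n=0$ collapses cleanly to $p_{n+1}=y\,q_n$. The justification that $\lambda_1=d$ is also fine: the row-sum condition gives $d$ as an eigenvalue, Gershgorin bounds the spectral radius by $d$, and the eigenvalues are real because a nonnegative tridiagonal matrix has the same characteristic polynomial as the symmetric tridiagonal matrix with off-diagonals $\sqrt{b_{k-1}c_k}$.

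One small cosmetic point: the paper's statement has an index slip (the matrix $A$ as written is $(n{+}1)\times(n{+}1)$ and $\widetilde A$ is $n\times n$, not $(n{-}1)\times(n{-}1)$); your indexing with $p_{n+1}=\det(xI-A)$ and $q_n=\det(xI-\widetilde A)$ silently corrects this and is the right way to read the theorem.
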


Note that $\lambda_1(A)=d$ and $\lambda_2(A)=\lambda_1(\widetilde{A})$.

\begin{prop}\label{component_size}
	Let $G$ be a $d$-regular graph with $\kappa'(G)=r \le d-1$. 
If $|[S,\overline{S}]|=r$ for $S\subseteq V(G)$, then both $S$ and $\overline{S}$ have at least $d+1$ vertices. Furthermore, if $r$ is odd, then both have at least $d+2$ vertices.
\end{prop}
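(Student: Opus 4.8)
The plan is a double-counting argument on the minimum edge-cut $[S,\overline{S}]$, followed by a short parity refinement for the ``furthermore'' clause. Write $s=\abs{S}$. By the symmetry between $S$ and $\overline{S}$ (both are nonempty since the cut has $r\ge 1$ edges), it suffices to prove the lower bounds for $s$; the identical reasoning then applies to $\overline{S}$.

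First I would establish $s\ge d+1$ by contradiction. Suppose $s\le d$. Since $G$ is simple, each vertex $v\in S$ has at most $s-1$ neighbours inside $S$, hence at least $d-(s-1)=d-s+1\ge 1$ edges joining it to $\overline{S}$; summing over $v\in S$ gives
\[
r=\abs{[S,\overline{S}]}\;\ge\;s(d-s+1).
\]
The function $g(s)=s(d-s+1)=-s^2+(d+1)s$ is concave, so over the integer interval $\{1,\dots,d\}$ its minimum is attained at an endpoint, and $g(1)=g(d)=d$. Thus $r\ge d$, contradicting $r\le d-1$. Hence $s\ge d+1$, and likewise $\abs{\overline{S}}\ge d+1$.

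For the refinement, suppose $r$ is odd. Counting edges inside $G[S]$ in two ways, $2\abs{E(G[S])}=\sum_{v\in S}\deg_{G[S]}(v)=ds-r$, so $ds-r$ is even; since $r$ is odd, $ds$ is odd, which forces both $d$ and $s$ to be odd. In particular $s\ne d+1$ (an even number), and combined with $s\ge d+1$ this gives $s\ge d+2$. Applying the same argument to $\overline{S}$ yields $\abs{\overline{S}}\ge d+2$.

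I do not expect a genuine obstacle here: the argument uses nothing beyond $d$-regularity and simplicity of $G$. The only points needing care are checking that the concave quadratic $g(s)$ is indeed minimized at the endpoints of $\{1,\dots,d\}$, so that the bound $r\ge d$ holds uniformly for every $s$ in that range, and noticing that the parity of $ds-r$ rules out precisely the boundary case $s=d+1$ when $r$ is odd. Everything else is routine.
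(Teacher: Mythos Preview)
Your argument is correct and matches the paper's proof essentially line for line: the same counting bound $r\ge s(d+1-s)\ge d$ (the paper states the endpoint inequality directly, you justify it via concavity), and the same Degree-Sum parity argument $ds=2\abs{E(G[S])}+r$ to rule out $s=d+1$ when $r$ is odd.
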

\begin{proof}
If $\abs{S}\leq d$, then 
\[d-1\geq r \geq \abs{S}(d+1-\abs{S})\geq d,\]
which is a contradiction.
Now we assume that $r$ is odd.
Since $\abs{S}\geq d+1$, we may assume that $\abs{S}=d+1$. Then
\[d\abs{S}=d(d+1)=2|E(G[S])|+r.\]
Since $d(d+1)$ is even and $r$ is odd, this is a contradiction.
\end{proof}

Now, we are ready to show that Conjecture~\ref{O-conj} is true. We prove the contrapositive of Theorem~\ref{main}$:$ if $\kappa'(G) \le t$, then $\lambda_2(G) \ge \rho(d,t)$. Since $G_{d,t}$ is a $d$-regular graph with $\kappa'(G_{d,t})=t$ and $\lambda_2(G_{d,t})=\rho(d,t)$, the upper bounds for $\lambda_2(G)$ in Theorem~\ref{main} are sharp.

\begin{thm}\label{main}
Let $d$ and $t$ be positive integers such that $3 \le t \le d-1$.
If $G$ is a $d$-regular simple graph 
with $\lambda_2(G) < \frac{d-3+\sqrt{(d+3)^2-8t}}{2}$,
then $\kappa'(G) \ge t+1$.
Furthermore, for odd $t$,
if $G$ is a $d$-regular simple graph 
with $\lambda_2(G) < \frac{d-4+\sqrt{(d+4)^2-8t}}{2}$,
then $\kappa'(G) \ge t+1$.
\end{thm}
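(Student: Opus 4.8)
The plan is to establish the contrapositive in one uniform form: if $G$ is a $d$-regular simple graph with $\kappa'(G)=r$ for some $1\le r\le d-1$, then $\lambda_2(G)\ge\rho(d,r)$, read off the odd branch of $\rho$ when $r$ is odd and the even branch when $r$ is even. Both assertions of Theorem~\ref{main} will then follow from three elementary facts: (i) each branch of $\rho(d,\cdot)$ is strictly decreasing in its second variable; (ii) $\frac{d-4+\sqrt{(d+4)^2-8t}}{2}\ge\frac{d-3+\sqrt{(d+3)^2-8t}}{2}$ for $1\le t\le d-1$ (immediate from $\sqrt{(d+4)^2-8t}<d+4$ and $\sqrt{(d+3)^2-8t}<d+3$); and (iii) $\frac{d-3+\sqrt{(d+3)^2-8(t-1)}}{2}\ge\frac{d-4+\sqrt{(d+4)^2-8t}}{2}$ for $1\le t\le d+2$ (immediate after squaring twice, using $t\le d+2$). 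Writing $r=\kappa'(G)\le t$: for the first statement I would bound $\lambda_2(G)\ge\rho(d,r)$ and descend to the even branch at $t$ using (i) and (ii); for the ``furthermore'' (with $t$ odd) I would use (i) directly when $r$ is odd, and use $r\le t-1$ together with (i) and (iii) when $r$ is even.

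For the main inequality I would take a minimum edge cut $[S,\overline S]$, so $\abs{[S,\overline S]}=r$. If $r=0$ then $G$ is disconnected, $\lambda_2(G)=d>\rho(d,r)$, and we are done; so assume $r\ge 1$. By Proposition~\ref{component_size}, $\abs{S},\abs{\overline S}\ge d+1$, with the stronger bound $\abs{S},\abs{\overline S}\ge d+2$ when $r$ is odd. Let $S_1\subseteq S$ and $T_1\subseteq\overline S$ be the sets of endpoints of the $r$ cut edges lying in $S$ and in $\overline S$, and put $S_0=S\setminus S_1$, $T_0=\overline S\setminus T_1$. Since $\abs{S_1}\le r\le d-1<\abs{S}$ (and similarly on the other side), all four parts are non-empty, and the only edges joining distinct parts run $S_0$--$S_1$, $S_1$--$T_1$, or $T_1$--$T_0$; hence the quotient matrix $Q$ of the ordered partition $(S_0,S_1,T_1,T_0)$ is a $4\times 4$ non-negative tridiagonal matrix with every row sum equal to $d$. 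By Corollary~\ref{quotient_eig_interlace}, $\lambda_2(G)\ge\lambda_2(Q)$, and by Theorem~\ref{DRG-BCN}, $\lambda_2(Q)=\lambda_1(\widetilde Q)$, where
\[
\widetilde Q=\begin{pmatrix} d-Q_{01}-Q_{10} & Q_{12} & 0\\ Q_{10} & d-Q_{12}-Q_{21} & Q_{23}\\ 0 & Q_{21} & d-Q_{23}-Q_{32} \end{pmatrix}.
\]
It therefore suffices to prove $\lambda_1(\widetilde Q)\ge\rho(d,r)$.

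Here $\widetilde Q$ is governed by only a handful of parameters: $Q_{12}=r/\abs{S_1}$ and $Q_{21}=r/\abs{T_1}$; $Q_{10}=\abs{[S_0,S_1]}/\abs{S_1}$ and $Q_{01}=\abs{[S_0,S_1]}/\abs{S_0}$, together with the mirror quantities $Q_{23},Q_{32}$ on the $\overline S$-side. The admissible values are constrained by $\abs{S_1},\abs{T_1}\le r$, by $\abs{[S_0,S_1]}\le\abs{S_0}\abs{S_1}$ and $\abs{[T_0,T_1]}\le\abs{T_0}\abs{T_1}$, by $\abs{S},\abs{\overline S}\ge d+1$ (or $d+2$ when $r$ is odd), and by the degree conditions inside $G[S]$ and $G[\overline S]$. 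Since the off-diagonal products of $\widetilde Q$ are non-negative, $\widetilde Q$ is similar to a real symmetric matrix, so it has real eigenvalues; and since its characteristic polynomial is monic of degree $3$ and tends to $+\infty$, it is enough to verify $\det\bigl(\rho(d,r)I-\widetilde Q\bigr)\le 0$. Writing $\mu=d-\rho(d,r)$, which satisfies $\mu^2-(d+3)\mu+2r=0$ when $r$ is even and $\mu^2-(d+4)\mu+2r=0$ when $r$ is odd, this determinant becomes an explicit polynomial in the parameters above; the aim is to show it is nonpositive throughout the feasible region, with equality (for $r\ge 3$) exactly at the configuration realised by $G_{d,r}$ — namely $\abs{S_1}=\abs{T_1}=r$, both $\abs{S}$ and $\abs{\overline S}$ minimal, and $\abs{[S_0,S_1]}=\abs{S_0}\abs{S_1}$, $\abs{[T_0,T_1]}=\abs{T_0}\abs{T_1}$. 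It is precisely at this point that the improved size bound $\abs{S}\ge d+2$ for odd $r$ upgrades the even-type exponent $(d+3)$ to the odd-type exponent $(d+4)$.

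The hard part will be this final verification: controlling the sign of $\det\bigl(\rho(d,r)I-\widetilde Q\bigr)$ over all admissible parameter values. The intended route is, first, to dispatch the ``spread-out'' case with the crude estimate $\lambda_2(G)\ge d-r\bigl(\tfrac1{\abs{S}}+\tfrac1{\abs{\overline S}}\bigr)$, obtained by applying the Rayleigh quotient to $\abs{\overline S}\chi_S-\abs{S}\chi_{\overline S}\perp\mathbf 1$, which already exceeds $\rho(d,r)$ unless $\abs{S}$ and $\abs{\overline S}$ are both near their minima; and then, in the remaining bounded regime, to show that each of $\abs{[S_0,S_1]}$, $\abs{[T_0,T_1]}$, $\abs{S_1}$, $\abs{T_1}$ moves the determinant in the favourable direction as it departs from the extremal configuration, which is a monotonicity-and-convexity analysis of the $3\times 3$ characteristic polynomial. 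The smallest values $r\in\{1,2\}$ can be treated directly in the same way (or quoted from Theorems~\ref{Cioaba1} and~\ref{Cioaba2}), and the most delicate bookkeeping will be to run both parities of $r$ through a single computation, so that one argument yields the general bound and the sharper odd-$t$ bound together.
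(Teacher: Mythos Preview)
Your plan follows the paper's own proof closely: contrapositive, the same four-part partition $(S_0,S_1,T_1,T_0)$ separating cut-edge endpoints, the tridiagonal quotient matrix, the reduction to the $3\times3$ matrix $\widetilde Q$ via Theorem~\ref{DRG-BCN}, and then an optimization over the remaining parameters. Your reformulation --- proving $\lambda_2(G)\ge\rho(d,r)$ once for $r=\kappa'(G)$ and deducing both statements via the monotonicity facts (i)--(iii) --- is a bit tidier than the paper's case split on $r<t$ versus $r=t$, but it saves no real work: the heart of both arguments is the same parameter optimization of $\lambda_1(\widetilde Q)$.

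There are two places where your outline is thinner than the paper's actual proof and would need repair. First, the crude Rayleigh bound $d-r/\abs{S}-r/\abs{\overline S}$ does \emph{not} reduce $\abs{S},\abs{\overline S}$ all the way to their minima: at $\abs{S}=\abs{\overline S}=d+1$ one gets $d-2r/(d+1)$, which is strictly \emph{below} $\rho(d,r)$ for every $r\le d-1$. Since your monotonicity list contains only $\abs{[S_0,S_1]},\abs{[T_0,T_1]},\abs{S_1},\abs{T_1}$ and omits $\abs{S},\abs{\overline S}$, the intermediate range (too small for the crude bound, larger than minimal) is unhandled. The paper sidesteps this entirely: the $(1,1)$ and $(3,3)$ entries of $\widetilde Q$ are monotone in $\abs{S},\abs{\overline S}$, so a Perron--Frobenius comparison reduces directly to $\abs{S}=\abs{\overline S}=d+1$ (or $d+2$ for odd $r$), and the crude-estimate step becomes unnecessary. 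Second, in the odd case at $\abs{S}=d+2$ the number $k=\abs{E(G[S_1])}$ is \emph{not} determined by $\abs{S_1}$ alone (it ranges over an interval of length $\abs{S_1}/2$), so there is an extra free parameter beyond the four you list. The paper's most delicate computation --- its Case~3, in particular the verification that $g(d-2)\le 0$ --- is exactly the argument that pins this extra parameter to its extremal value; you should expect to need an equivalent step, and it does not fall out of the ``single computation'' you anticipate for both parities.
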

\begin{proof}
Assume to the contrary that $\kappa'(G) \leq t$.
Then there exists a vertex subset $S\subseteq V(G)$ 
such that $\abs{[S,\overline{S}]}=r\leq t$
(see Figure~\ref{2by2_quotient}).
\begin{figure}[h]
\centering
	\begin{tikzpicture}[auto,node distance=1cm,semithick]
		\node(a) {$S$};
		\draw(a) circle(1cm);
		\node(a1) [above of=a, xshift=3mm, yshift=-3mm] {};
		\node(a2) [below of=a1, yshift=8mm] {};
		\node(a3) [below of=a, xshift=3mm, yshift=3mm] {};
		\node(a4) [above of=a3, yshift=-8mm] {};

		\node(c) [right of=a, xshift=10mm] {$\vdots$};
		\node [below of=c] {$r$};
		
		\node(b) [right of=c, xshift=10mm] {$\overline{S}$};
		\draw(b) circle(1cm);
		\node(b1) [above of=b, xshift=-3mm, yshift=-3mm] {};
		\node(b2) [below of=b1, yshift=8mm] {};
		\node(b3) [below of=b, xshift=-3mm, yshift=3mm] {};
		\node(b4) [above of=b3, yshift=-8mm] {};

		\path[-]
	    (a1) edge  (b1)
	    (a2) edge  (b2)
	    (a3) edge  (b3)
	    (a4) edge  (b4);
	
	\end{tikzpicture}
	\caption{$|[S,\overline{S}]|=r$}
	\label{2by2_quotient}
\end{figure}

Let $s=\abs{S}$ and let $s'=\abs{\overline{S}}$.
Then the quotient matrix of the partition $S$ and $\overline{S}$ is
\[Q_0=\left(
\begin{array}{cc}
	d-\frac{r}{s} & \frac{r}{s} \\
	\frac{r}{s'}  & d-\frac{r}{s'}
\end{array} \right).\]
The eigenvalues of the matrix $Q_0$ are $d$ and $d-\frac{r}{s}-\frac{r}{s'}$.
By Corollary~\ref{quotient_eig_interlace}
\begin{eqnarray}\label{quotient_eigen}
	\lambda_2(G)\geq \lambda_2(Q_0) =d-\frac{r}{s}-\frac{r}{s'}.
\end{eqnarray}\\







{\it Case 1: $r \le t-1.$}
By~(\ref{quotient_eigen}) and Proposition~\ref{component_size},
\[\lambda_2(G) \geq d-\frac{2r}{d+1} \geq d-\frac{2(t-1)}{d+1}.\]
Since $\left[\frac{4(t-1)}{d+1}\right]^2-\frac{16(t-1)}{d+1}+8
=\left[\frac{4(t-1)}{d+1}-2\right]^2+4>0$, we have

\begin{eqnarray*}
	\left[\frac{4(t-1)}{d+1}\right]^2-\frac{16(t-1)}{d+1}+8-8t &>& -8t   \\
	\left[\frac{4(t-1)}{d+1}\right]^2-\frac{16(t-1)}{d+1}-\frac{8(t-1)(d+1)}{d+1}
	&>& -8t   \\
	\left[\frac{4(t-1)}{d+1}\right]^2-\frac{8(t-1)(d+3)}{d+1}+(d+3)^2
	&>& (d+3)^2-8t   \\
	d+3-\frac{4(t-1)}{d+1}
	&>& \sqrt{(d+3)^2-8t}  
\end{eqnarray*}

\[d-\frac{2(t-1)}{d+1}>\frac{d-3+\sqrt{(d+3)^2-8t}}{2}.\]\\

{\it Case 2: $r=t$.}
Consider the vertex partition $V(G)=V_1\cup V_2\cup V_3\cup V_4$ such that $S=V_1 \cup V_2$,  $\overline{S}=V_3\cup V_4$, and $V_2$ and $V_3$ are the endpoints of the two edges between $S$ and $\overline{S}$.
Let $\alpha=|V_2|$ and let $\beta=|V_3|$.
Then $|V_1|=s-\alpha$ and $|V_4|=s'-\beta$.
Let $k=|E(G[V_2])|$ and $l=|E(G[V_3])|$~(see Figure~\ref{4by4_quotient}).

\begin{figure}[h]
\centering
	\begin{tikzpicture}[auto,node distance=1cm,semithick]
		\node(a) {};
		\draw(a) circle(1cm);
		\node 	  [xshift=-4mm]		{$s-\alpha$};
		\node(a1) [above of=a, xshift=3mm, yshift=-3mm] {};
		\node(a2) [below of=a1, yshift=8mm] {};
		\node(a3) [below of=a, xshift=3mm, yshift=3mm] {};
		\node(a4) [above of=a3, yshift=-8mm] {};

		\node(c) [right of=a, xshift=10mm] {$\vdots$};
		\node 	 [below of=c] {$t$};
		
		\node(b) [right of=c, xshift=10mm] {};
		\draw(b) circle(1cm);
		\node 	 [right of=c, xshift=13.5mm] {$s'-\beta$};
		\node(b1) [above of=b, xshift=-3mm, yshift=-3mm] {};
		\node(b2) [below of=b1, yshift=8mm] {};
		\node(b3) [below of=b, xshift=-3mm, yshift=3mm] {};
		\node(b4) [above of=b3, yshift=-8mm] {};

		\path[-]
	    (a1) edge  (b1)
	    (a2) edge  (b2)
	    (a3) edge  (b3)
	    (a4) edge  (b4);

		\node(1) [above of=a, xshift=2.5mm, yshift=1mm] {};
		\node(2) [below of=a, xshift=2.5mm, yshift=-1mm] {};
		\node(3) [above of=b, xshift=-2.5mm, yshift=1mm] {};
	    \node(4) [below of=b, xshift=-2.5mm, yshift=-1mm] {};
	    \node 	 [right of=a, xshift=-3mm] {$\alpha$};
	    \node 	 [left of=b, xshift=3mm] {$\beta$};
	    \node 	 [above of=c, xshift=-13mm] {$k$};
	    \node 	 [above of=c, xshift=13mm] {$l$};

	    \path[-]
	    (1) edge (2)
	    (3) edge (4);
	
	\end{tikzpicture}
	\caption{$|[S,\overline{S}]|=t$}
	\label{4by4_quotient}
\end{figure}

The quotient matrix of the partition $V_1$, $V_2$, $V_3$ and $V_4$ is 
\[Q_1=\left(
\begin{array}{cccc}
	\frac{d(s-\alpha)-(d\alpha-2k-t)}{s-\alpha} & \frac{d\alpha-2k-t}{s-\alpha} & 0 					 & 0 \\
	\frac{d\alpha-2k-t}{\alpha} 			    & \frac{2k}{\alpha} 		  & \frac{t}{\alpha} 	 & 0 \\
	0 											& \frac{t}{\beta} 			  & \frac{2l}{\beta} 	 & \frac{d\beta-2l-t}{\beta}\\
	0 	& 0		& \frac{d\beta-2l-t}{s'-\beta}  & \frac{d(s'-\beta)-(d\beta-2l-t)}{s'-\beta}
\end{array} \right).\]
Note that $Q_1$ is non-negative tridiagonal matrix 
and that each row sum of $Q_1$ is $d$.
By Theorem~\ref{DRG-BCN}, we have
\[\widetilde{Q_1}=\left(
\begin{array}{ccc}
	\frac{2k+t}{\alpha}-\frac{d\alpha-2k-t}{s-\alpha} 	& \frac{t}{\alpha} 					 & 0 \\
	d-\frac{2k+t}{\alpha} 								& d-\frac{t}{\alpha}-\frac{t}{\beta} & d-\frac{2l+t}{\beta}\\
	0 													& \frac{t}{\beta} 					 & \frac{2l+t}{\beta}-\frac{d\beta-2l-t}{s'-\beta}
\end{array} \right)\]
and $\lambda_1(\widetilde{Q_1})=\lambda_2(Q_1)$.
Since $s\geq d+1$ by Proposition~\ref{component_size}, 
\[\frac{2k+t}{\alpha}-\frac{d\alpha-2k-t}{s-\alpha} 
	\geq \frac{2k+t}{\alpha}-\frac{d\alpha-2k-t}{d+1-\alpha}\]
and similarly
\[\frac{2l+t}{\beta}-\frac{d\beta-2l-t}{s'-\beta} 
	\geq \frac{2l+t}{\beta}-\frac{d\beta-2l-t}{d+1-\beta}.\]

Let
\[\widetilde{Q'_1}=\left(
\begin{array}{ccc}
	\frac{2k+t}{\alpha}-\frac{d\alpha-2k-t}{d+1-\alpha}  		  & \frac{t}{\alpha} 	& 0 \\
	d-\frac{2k+t}{\alpha} 	& d-\frac{t}{\alpha}-\frac{t}{\beta}  & d-\frac{2l+t}{\beta}\\
	0 						& \frac{t}{\beta} 			  	      & \frac{2l+t}{\beta}-\frac{d\beta-2l-t}{d+1-\beta}
\end{array} \right).\]
Then for large enough $\Delta\in\mathbb{R}$,
$\widetilde{Q_1}+\Delta I$ and $\widetilde{Q'_1}+\Delta I$ are non-negative matrices.
Since $\widetilde{Q_1}+\Delta I - (\widetilde{Q'_1}+\Delta I)$ is non-negative matrix,
by Perron-Frobenius,
\[\lambda_1(\widetilde{Q_1}+\Delta I)\geq \lambda_1(\widetilde{Q'_1}+\Delta I),\]
which implies that
\begin{equation}\label{even_by_PF}
	\lambda_1(\widetilde{Q_1})\geq \lambda_1(\widetilde{Q'_1}).
\end{equation}
Thus we consider $s=d+1$ in $\widetilde{Q_1}$. Then by the Degree-Sum formula,
\begin{eqnarray*}
	ds=d(d+1) &=& 2\set{\frac{d(d+1-\alpha)}{2}+\frac{\alpha(d+1-\alpha)}{2}+k}+t \\
		   &=& d(d+1)-\alpha d + \alpha d+\alpha(1-\alpha)+2k+t,
\end{eqnarray*}
which implies $k=\frac{\alpha(\alpha-1)}{2}-\frac{t}{2}$. Thus we have
\begin{eqnarray*}
	\frac{2k+t}{\alpha}-\frac{d\alpha-2k-t}{d+1-\alpha}
	&=& \frac{\alpha(\alpha-1)-t+t}{\alpha}
			-\frac{d\alpha-\alpha(\alpha-1)+t-t}{d+1-\alpha}\\
	&=& \alpha-1-\alpha=-1.
\end{eqnarray*}
Similarly, we have $l=\frac{\beta(\beta-1)}{2}-\frac{t}{2}$, thus
\[\widetilde{Q'_1}=\left(
\begin{array}{ccc}
	-1         & \frac{t}{\alpha} 				    & 0         \\
	d-\alpha+1 & d-\frac{t}{\alpha}-\frac{t}{\beta} & d-\beta+1 \\
	0 		   & \frac{t}{\beta} 			  	    & -1
\end{array} \right).\]

The characteristic polynomial of $\widetilde{Q'_1}$ is
\begin{eqnarray*}
	&&(x+1)\set{(x-d+\frac{t}{\alpha}+\frac{t}{\beta})(x+1)+(-d+\beta-1)\frac{t}{\beta}}
			+\frac{t}{\alpha}\set{(-d+\alpha-1)(x+1)}\\
	&&=(x+1)\set{x^2+(-d+1+\frac{t}{\alpha}+\frac{t}{\beta})x+(-d-\frac{dt}{\beta}-\frac{dt}{\alpha}+2t)}.
\end{eqnarray*}
The largest root of the characteristic polynomial is 
\begin{eqnarray}\label{even_largeeq}
\frac{d-1-\frac{t}{\alpha}-\frac{t}{\beta}+\sqrt{(d-1-\frac{t}{\alpha}-\frac{t}{\beta})^2
+4(d+\frac{dt}{\alpha}+\frac{dt}{\beta}-2t)}}{2}.
\end{eqnarray}
We show that ($\ref{even_largeeq}$) is a non-increasing function of $\alpha$.
Claim that 
\begin{eqnarray}\label{even_alpha_and_t}
	\nonumber
	&&\frac{d-1-\frac{t}{\alpha+1}-\frac{t}{\beta}+\sqrt{(d-1-\frac{t}{\alpha+1}-\frac{t}{\beta})^2
			+4(d+\frac{dt}{\alpha+1}+\frac{dt}{\beta}-2t)}}{2}\\
	&&\leq\frac{d-1-\frac{t}{\alpha}-\frac{t}{\beta}+\sqrt{(d-1-\frac{t}{\alpha}-\frac{t}{\beta})^2
			+4(d+\frac{dt}{\alpha}+\frac{dt}{\beta}-2t)}}{2}.
\end{eqnarray}
Since $\frac{t}{\alpha+1}-\frac{t}{\alpha}=\frac{-t}{\alpha(\alpha+1)}$, we want to show that
\begin{eqnarray*}
	0&\leq&\sqrt{(d-1-\frac{t}{\alpha}-\frac{t}{\beta})^2
			+4(d+\frac{dt}{\alpha}+\frac{dt}{\beta}-2t)} \\
	&&- \frac{t}{\alpha(\alpha+1)} - \sqrt{(d-1-\frac{t}{\alpha+1}-\frac{t}{\beta})^2
			+4(d+\frac{dt}{\alpha+1}+\frac{dt}{\beta}-2t)}.
\end{eqnarray*}
It suffices to show that
\begin{eqnarray}\label{eq5}
	\nonumber 0&\leq&(d-1-\frac{t}{\alpha}-\frac{t}{\beta})^2 +4(d+\frac{dt}{\alpha}+\frac{dt}{\beta}-2t)\\
	\nonumber &&- \left[\frac{t}{\alpha(\alpha+1)}\right]^2
			-2\left[\frac{t}{\alpha(\alpha+1)}\right]\sqrt{(d-1-\frac{t}{\alpha+1}-\frac{t}{\beta})^2
			+4(d+\frac{dt}{\alpha+1}+\frac{dt}{\beta}-2t)}\\
	\nonumber &&	 -(d-1-\frac{t}{\alpha+1}-\frac{t}{\beta})^2-4(d+\frac{dt}{\alpha+1}+\frac{dt}{\beta}-2t) \\
	 &=&(d-1-\frac{t}{\alpha}-\frac{t}{\beta})^2
			-(d-1-\frac{t}{\alpha+1}-\frac{t}{\beta})^2
			+4\frac{dt}{\alpha}-4\frac{dt}{\alpha+1}
			-\left[\frac{t}{\alpha(\alpha+1)}\right]^2 \\
 	\nonumber &&-2\left[\frac{t}{\alpha(\alpha+1)}\right]\sqrt{(d-1-\frac{t}{\alpha+1}-\frac{t}{\beta})^2
			+4(d+\frac{dt}{\alpha+1}+\frac{dt}{\beta}-2t)}.
\end{eqnarray}
By simplifying ($\ref{eq5}$), we have
\begin{eqnarray*}
	&&\set{(d-1-\frac{t}{\beta})-\frac{t}{\alpha}}^2
			-\set{(d-1-\frac{t}{\beta})-\frac{t}{\alpha+1}}^2
			+\frac{4dt}{\alpha(\alpha+1)}-\left[\frac{t}{\alpha(\alpha+1)}\right]^2	\\
	&&=-\frac{2t}{\alpha}(d-1-\frac{t}{\beta})+\frac{t^2}{\alpha^2}
			+\frac{2t}{\alpha+1}(d-1-\frac{t}{\beta})-\frac{t^2}{(\alpha+1)^2}
			+\frac{4dt}{\alpha(\alpha+1)}-\frac{t^2}{\alpha^2(\alpha+1)^2} \\
	&&=\set{\frac{2t}{\alpha+1}-\frac{2t}{\alpha}}(d-1-\frac{t}{\beta})
			+\frac{(\alpha+1)^2t^2-\alpha^2t^2}{\alpha^2(\alpha+1)^2}
			+\frac{4dt}{\alpha(\alpha+1)}-\frac{t^2}{\alpha^2(\alpha+1)^2} \\
	&&=\frac{-2t}{\alpha(\alpha+1)}(d-1-\frac{t}{\beta})+\frac{4dt}{\alpha(\alpha+1)}
			+\frac{(\alpha+1)^2t^2-\alpha^2t^2-t^2}{\alpha^2(\alpha+1)^2}   \\
	&&=\frac{-2t}{\alpha(\alpha+1)}(-d-1-\frac{t}{\beta})
			+\frac{2\alpha t^2}{\alpha^2(\alpha+1)^2}\\
	&&=\frac{-2t}{\alpha(\alpha+1)}(-d-1-\frac{t}{\beta}-\frac{t}{\alpha+1}).
\end{eqnarray*}
Thus we want to show that
\begin{eqnarray*}
	0&\leq&\frac{-2t}{\alpha(\alpha+1)}(-d-1-\frac{t}{\alpha+1}-\frac{t}{\beta})\\
	&&-2\left[\frac{t}{\alpha(\alpha+1)}\right]\sqrt{(d-1-\frac{t}{\alpha+1}-\frac{t}{\beta})^2
			+4(d+\frac{dt}{\alpha+1}+\frac{dt}{\beta}-2t)},
\end{eqnarray*}
which suffices to show that
\begin{eqnarray*}
	0&\leq&\set{\frac{2t}{\alpha(\alpha+1)}}^2(d+1+\frac{t}{\alpha+1}+\frac{t}{\beta})^2\\
	&&-\set{\frac{2t}{\alpha(\alpha+1)}}^2
			\set{(d-1-\frac{t}{\alpha+1}-\frac{t}{\beta})^2
			+4(d+\frac{dt}{\alpha+1}+\frac{dt}{\beta}-2t)}\\
	&=&\set{\frac{2t}{\alpha(\alpha+1)}}^2
			\set{(d+1+\frac{t}{\alpha+1}+\frac{t}{\beta})^2
			-(d-1-\frac{t}{\alpha+1}-\frac{t}{\beta})^2
			-4(d+\frac{dt}{\alpha+1}+\frac{dt}{\beta}-2t)}.
\end{eqnarray*}
Note that
\begin{eqnarray*}
	&&(d+1+\frac{t}{\alpha+1}+\frac{t}{\beta})^2 
			-(d-1-\frac{t}{\alpha+1}-\frac{t}{\beta})^2
			-4(d+\frac{dt}{\alpha+1}+\frac{dt}{\beta}-2t) \\
	&&=\set{d+(1+\frac{t}{\alpha+1}+\frac{t}{\beta})}^2 
			-\set{d-(1+\frac{t}{\alpha+1}+\frac{t}{\beta})}^2
			-4(d+\frac{dt}{\alpha+1}+\frac{dt}{\beta}-2t) \\
	&&=2d(1+\frac{t}{\alpha+1}+\frac{t}{\beta}) 
			+2d(1+\frac{t}{\alpha+1}+\frac{t}{\beta})
			-4(d+\frac{dt}{\alpha+1}+\frac{dt}{\beta}-2t) \\
	&&=4d(1+\frac{t}{\alpha+1}+\frac{t}{\beta}) 
			-4(d+\frac{dt}{\alpha+1}+\frac{dt}{\beta}-2t) \\
	&&=8t. 
\end{eqnarray*}
Thus Inequality~(\ref{even_alpha_and_t}) holds.

Similarly, we can show that ($\ref{even_largeeq}$) is a non-increasing function of $\beta$.

Since $\alpha, \beta \leq t$, if we replace both $\alpha$ and $\beta$ with $t$ in $\widetilde{Q'_1}$, then we have 
\[\widetilde{Q''_1}=\left(
\begin{array}{ccc}
	-1    & 1   & 0         \\
	d-t+1 & d-2 & d-t+1 \\
	0 	  & 1   & -1
\end{array} \right).\]
Then since ($\ref{even_largeeq}$) is a non-increasing function of $\alpha$ and $\beta$,  we have 
\[\lambda_2(G) \geq \lambda_2(Q_1)=\lambda_1(\widetilde{Q_1})\geq\lambda_1(\widetilde{Q'_1})\geq\lambda_1(\widetilde{Q''_1})=\frac{d-3+\sqrt{(d+3)^2-8t}}{2},\]
by Corollary~\ref{quotient_eig_interlace}, Theorem~\ref{DRG-BCN},and~(\ref{even_by_PF}). \\

In the above two cases, we showed that if $G$ is a $d$-regular graph with $\kappa'(G) \le t$, then
$\lambda_2(G) \ge \frac{d-3+\sqrt{(d+3)^2-8t}}2$, which proved the first statement in Theorem~\ref{main}. 
This result shows that when $r \le t-1$
\begin{eqnarray*}
	\lambda_2(G) \geq \frac{d-3+\sqrt{(d+3)^2-8(t-1)}}{2}
	> \frac{d-4+\sqrt{(d+4)^2-8t}}{2}.
\end{eqnarray*}
The remaining case for odd $t$ is  when $r=t$.\\

{\it Case 3: $r=t$ and $t$ is odd.} 
Consider the quotient matrix $Q_1$ in Case 2. Since $t$ is odd, we have $s,s' \ge d+2$ by Proposition~\ref{component_size}. We replace $s$ and $s'$ with $d+2$ to have 
\[\widetilde{Q'_1}=\left(
\begin{array}{ccc}
	\frac{2k+t}{\alpha}-\frac{d\alpha-2k-t}{d+2-\alpha} & \frac{t}{\alpha} 				    & 0 \\
	d-\frac{2k+t}{\alpha} 	 & d-\frac{t}{\alpha}-\frac{t}{\beta}  & d-\frac{2l+t}{\beta}\\
	0 						 & \frac{t}{\beta} 			  	    & \frac{2l+t}{\beta}-\frac{d\beta-2l-t}{d+2-\beta}
\end{array} \right),\]
and with a similar proof to Case 2, we have
\begin{equation}\label{odd_by_PF}
	\lambda_2(Q_1)=\lambda_1(\widetilde{Q_1})\geq \lambda_1(\widetilde{Q'_1}).
\end{equation}
Now, assume that both $S$ and $\overline{S}$ has exactly $d+2$ vertices.
Note that since every vertex has the fixed degree $d$, the number $k$ is determined by the number of edges between $V_1$ and $V_2$. For $v\in V_1$, there are only two choices: $v$ is adjacent to either $\alpha$ vertices in $V_2$ or $\alpha -1$ vertices
since $G$ is $d$-regular and $|S|=d+2$.

\noindent
Note that if every vertex in $V_1$ is adjacent to $\alpha$ vertices in $V_2$, then we have the minimum $k$ and that if every vertex in $V_1$ is adjacent to $\alpha-1$ vertices in $V_2$, then we have the maximum $k$. Thus we have the following inequalities:
\begin{eqnarray*}
	ds=d(d+2) &\leq& 2\set{\frac{d(d+2-\alpha)}{2}+\frac{\alpha(d+2-\alpha)}{2}+k}+t \\
		   	  &=&    d(d+2)-\alpha d + \alpha d+\alpha(2-\alpha)+2k+t.
\end{eqnarray*}
\begin{eqnarray*}
	ds=d(d+2) &\geq& 2\set{\frac{d(d+2-\alpha)}{2}+\frac{\alpha(d+2-\alpha-1)}{2}+k}+t \\
		      &=&    d(d+2)-\alpha d + \alpha d+\alpha(1-\alpha)+2k+t.
\end{eqnarray*}
By solving for $k$ in the above inequalities, we have
$\frac{\alpha(\alpha-1)}{2}-\frac{t}{2}-\frac{\alpha}{2}\leq k \leq \frac{\alpha(\alpha-1)}{2}-\frac{t}{2}$.
By setting $k=\frac{\alpha(\alpha-1)}{2}-\frac{t}{2}-\frac{\alpha}{2}+\epsilon$,
where $0\leq\epsilon\leq\frac{\alpha}{2}$, we have
\begin{eqnarray*}
 	\frac{2k+t}{\alpha}-\frac{d\alpha-2k-t}{d+2-\alpha}
 	&=& \frac{\alpha(\alpha-1)-t-\alpha+2\epsilon+t}{\alpha}
 			-\frac{d\alpha-\alpha(\alpha-1)+t+\alpha-2\epsilon-t}{d+2-\alpha}\\
 	&=& \alpha-2+\frac{2\epsilon}{\alpha}-\alpha+\frac{2\epsilon}{d+2-\alpha}
 		=-2+\frac{2\epsilon}{\alpha}+\frac{2\epsilon}{d+2-\alpha}.
\end{eqnarray*} 
Similarly, we can have 
$\frac{\beta(\beta-1)}{2}-\frac{t}{2}-\frac{\beta}{2}\leq l \leq \frac{\beta(\beta-1)}{2}-\frac{t}{2}$.
By setting $l=\frac{\beta(\beta-1)}{2}-\frac{t}{2}-\frac{\beta}{2}+\epsilon'$,
where $0\leq\epsilon'\leq\frac{\beta}{2}$, we have

\[\widetilde{Q'_1}=\left(
\begin{array}{ccc}
	-2+\frac{2\epsilon}{\alpha}+\frac{2\epsilon}{d+2-\alpha} & \frac{t}{\alpha}   & 0 \\
	d-\alpha+2-\frac{2\epsilon}{\alpha}		& d-\frac{t}{\alpha}-\frac{t}{\beta}  & d-\beta+2-\frac{2\epsilon'}{\beta} \\
	0 				& \frac{t}{\beta} 	 	& -2+\frac{2\epsilon'}{\beta}+\frac{2\epsilon'}{d+2-\beta}
\end{array} \right),\]
Let $A=\frac{2\epsilon}{\alpha}$ and $B=\frac{2\epsilon'}{\beta}$. Then
\[\widetilde{Q'_1}=\left(
\begin{array}{ccc}
	-2+\frac{(d+2)A}{d+2-\alpha} 	& \frac{t}{\alpha}   					& 0 \\
	d-\alpha+2-A				& d-\frac{t}{\alpha}-\frac{t}{\beta}  	& d-\beta+2-B \\
	0 							& \frac{t}{\beta} 	 					& -2+\frac{(d+2)B}{d+2-\beta}
\end{array} \right).\]
The characteristic polynomial of $\widetilde{Q'_1}$ is

\begin{eqnarray*}
	f(x)&=& \set{x+2-\frac{(d+2)A}{d+2-\alpha}}
		 	\bigg[(x-d+\frac{t}{\alpha}+\frac{t}{\beta})\left(x+2-\frac{(d+2)B}{d+2-\beta}\right)
		 	-(d-\beta+2-B)\frac{t}{\beta}\bigg]\\
	     && -\frac{t}{\alpha}(d-\alpha+2-A)\left[x+2-\frac{(d+2)B}{d+2-\beta}\right].
\end{eqnarray*}
If $A=0$, then we have  
\begin{eqnarray*}
	g(x)&=& (x+2)
		 	\bigg[(x-d+\frac{t}{\alpha}+\frac{t}{\beta})\left(x+2-\frac{(d+2)B}{d+2-\beta}\right)
		 	-(d-\beta+2-B)\frac{t}{\beta}\bigg] \\
	     &&-\frac{t}{\alpha}(d-\alpha+2)\left[x+2-\frac{(d+2)B}{d+2-\beta}\right].
\end{eqnarray*}
To show that the largest root of $g(x)$ is at most the largest root of $f(x)$, we first show that the largest root of $g(x)$ is at least $d-2$.

Let $\theta$ be the largest root of the polynomial $g(x)$.
Since $g(x)$ is a cubic function with a positive leading coefficient, 
if $g(d-2)\leq0$, then 
\begin{eqnarray}\label{d-2}
	d-2\leq\theta.
\end{eqnarray}
Claim that $g(d-2)\leq0$.
\begin{eqnarray*}
	& & g(d-2) \\
	&=& d \set{(-2+\frac{t}{\alpha}+\frac{t}{\beta})\left(d-\frac{(d+2)B}{d+2-\beta}\right)-(d-\beta+2-B)\frac{t}{\beta}}
				-\frac{t}{\alpha}(d-\alpha+2)\left[d-\frac{(d+2)B}{d+2-\beta}\right]\\
		   &=& d \bigg[-2d+(\frac{t}{\alpha}+\frac{t}{\beta})d
		   			+\frac{2(d+2)B}{d+2-\beta}-(\frac{t}{\alpha}+\frac{t}{\beta})\frac{(d+2)B}{d+2-\beta}
		   	 		-\frac{dt}{\beta}+t-\frac{2t}{\beta}+\frac{tB}{\beta}\bigg]\\
		   	&& -\frac{t}{\alpha}\bigg[d^2-\frac{d(d+2)B}{d+2-\beta}-\alpha d+\frac{\alpha(d+2)B}{d+2-\beta}
		   			+2d-\frac{2(d+2)B}{d+2-\beta}\bigg]\\
		   &=& d \bigg[-2d+\frac{2(d+2)B}{d+2-\beta}-\frac{t}{\beta}\frac{(d+2)B}{d+2-\beta}
		   	 		+t-\frac{2t}{\beta}+\frac{tB}{\beta}\bigg]\\
		   	&& -\frac{t}{\alpha}\bigg[-\alpha d+\frac{\alpha(d+2)B}{d+2-\beta}
		   			+2d-\frac{2(d+2)B}{d+2-\beta}\bigg]\\
		   &=& -2d^2+(2-\frac{2}{\beta}+\frac{B}{\beta}-\frac{2}{\alpha})dt
		   			+(2d-\frac{dt}{\beta}-t+\frac{2t}{\alpha})\frac{(d+2)B}{d+2-\beta}.
\end{eqnarray*}
If $2d-\frac{dt}{\beta}-t+\frac{2t}{\alpha}<0$, then
\begin{eqnarray*}
	g(d-2) &<& -2d^2+(2-\frac{2}{\beta}+\frac{B}{\beta}-\frac{2}{\alpha})dt.
\end{eqnarray*}
Since $0\leq B\leq1$ and $\alpha,\beta\leq t < d$, 
\begin{eqnarray*}
	g(d-2) &<& -2d^2+(2-\frac{2}{\beta}+\frac{1}{\beta}-\frac{2}{\alpha})dt=-2d^2+(2-\frac{1}{\beta}-\frac{2}{\alpha})dt\\
		   &<& -2d^2+(2-\frac{1}{t}-\frac{2}{t})dt=-2d^2+(2-\frac{3}{t})dt=d(-2d+2t-3) <0.
\end{eqnarray*}
Now, we assume that $2d-\frac{dt}{\beta}-t+\frac{2t}{\alpha}\geq 0$.
Since $0\leq B\leq1$ and $\beta\leq t$, 
\begin{eqnarray*}
	g(d-2) &\leq& -2d^2+(2-\frac{2}{\beta}+\frac{1}{\beta}-\frac{2}{\alpha})dt
					+(2d-\frac{dt}{\beta}-t+\frac{2t}{\alpha})\frac{d+2}{d+2-\beta}\\
		   &\leq& -2d^2+(2-\frac{1}{t}-\frac{2}{\alpha})dt+(2d-d-t+\frac{2t}{\alpha})\frac{d+2}{d+2-t}\\
		   &=&    -2d^2+2dt-d+(d-t)\frac{d+2}{d+2-t}+\frac{2t}{\alpha}(-d+\frac{d+2}{d+2-t}).
\end{eqnarray*}
Since $-d+\frac{d+2}{d+2-t}<0$ and $\alpha\leq t$,
\begin{eqnarray*}
	g(d-2) &\leq& -2d^2+2dt-d+(d-t)\frac{d+2}{d+2-t}+\frac{2t}{t}(-d+\frac{d+2}{d+2-t})\\
		   &=& 	  -2d^2+2dt-d+(d-t)\frac{d+2}{d+2-t}-2d+\frac{2(d+2)}{d+2-t}\\
		   &=& 	  -2d^2+(2t-3)d+(d-t+2)\frac{d+2}{d+2-t}.
\end{eqnarray*}
Since $t\leq d-1$, we have
\begin{eqnarray*}
	g(d-2) &\leq& -2d^2+(2d-5)d+d+2=-4d+2<0.
\end{eqnarray*}
Since $\theta$ is a root of $g(x)$, we have
\begin{eqnarray}\label{theta}
	\nonumber
	g(\theta) &=& (\theta+2)\set{(\theta-d+\frac{t}{\alpha}
					+\frac{t}{\beta})\left(\theta+2-\frac{(d+2)B}{d+2-\beta}\right)-(d-\beta+2-B)\frac{t}{\beta}} \\
			  && -\frac{t}{\alpha}(d-\alpha+2)\left[\theta+2-\frac{(d+2)B}{d+2-\beta}\right]=0.
\end{eqnarray}
Then from (\ref{theta}), we have
\begin{eqnarray}\label{theta_equal}
	\nonumber 
	&&(\theta-d+\frac{t}{\alpha}+\frac{t}{\beta})(\theta+2
	-\frac{(d+2)B}{d+2-\beta})-(d-\beta+2-B)\frac{t}{\beta}\hskip2cm \\
	&&= \frac{t}{\alpha(\theta+2)}(d-\alpha+2)\left[\theta+2-\frac{(d+2)B}{d+2-\beta}\right].
\end{eqnarray}
By applying (\ref{theta}) and (\ref{theta_equal}) to $f(\theta)$, we have
\begin{eqnarray*}
	f(\theta) &=& -\frac{(d+2)A}{d+2-\alpha}\set{\frac{t}{\alpha(\theta+2)}(d-\alpha+2)(\theta+2-\frac{(d+2)B}{d+2-\beta})}
			  		+\frac{At}{\alpha}(\theta+2-\frac{(d+2)B}{d+2-\beta})\\
	          &=&  \frac{At}{\alpha}(\theta+2-\frac{(d+2)B}{d+2-\beta})(-\frac{d+2}{\theta+2}+1).
\end{eqnarray*}
Then by~(\ref{d-2}), we have
\[\frac{At}{\alpha}(\theta+2-\frac{(d+2)B}{d+2-\beta})(-\frac{d+2}{\theta+2}+1)\le 0.\]
This implies that 
for any $A\neq0$, the largest eigenvalue of $\widetilde{Q'_1}$ is bigger than $\theta$~(see Figure~\ref{cubic_nonzero}).

\begin{figure}[h]
\begin{center}
		\begin{tikzpicture}
			\draw[->] (2,0) -- (11,0) node[right] {$x$};
		
			\tikzstyle{vertex}=[circle,fill,inner sep=1pt]
			\node[vertex] (a) at (9,0) {};
			\node[vertex] (b) at (9,-1.06) {};
		
			\draw (9.5,1.5) node[above, scale=0.8] {$A=0$};
			\draw (10.8,1.5) node[above, scale=0.8] {$A\neq0$};
			\draw (a) node[above, xshift=-1mm, scale=0.8] {$\theta$};
			\path[dotted,thick] (a) edge (b);
		
		    \draw[scale=3,domain=0.8:3.2,semithick,smooth,variable=\x] 
		    		plot(\x,{\x*\x*\x-6*\x*\x+11*\x-6});
		    \draw[scale=3,domain=2.8:3.5,semithick,smooth,variable=\x,dashed] 
		    		plot(\x,{(\x-0.3)*(\x-0.3)*(\x-0.3)-6*(\x-0.3)*(\x-0.3)+11*(\x-0.3)-6});
		\end{tikzpicture}
	\caption{}
	\label{cubic_nonzero}
\end{center}
\end{figure}

Similarly, we can show that for any $B\neq 0$, the largest eigenvalues of $\widetilde{Q_1'}$ is bigger than the largest root of $h(x)$, where $h(x)$ is the cubic polynomial obtained from $f(x)$ by plugging 0 into $B$.

Now, we assume that $A=0$ and $B=0$ in $\widetilde{Q_1'}$. Then we have 

\[\widetilde{Q'_1}=\left(
\begin{array}{ccc}
	-2	            & \frac{t}{\alpha}   					& 0         \\
	d-\alpha+2		& d-\frac{t}{\alpha}-\frac{t}{\beta}  	& d-\beta+2 \\
	0 				& \frac{t}{\beta} 	 					& -2
\end{array} \right).\]

The characteristic polynomial of $\widetilde{Q'_1}$ is 
\begin{eqnarray*}
	&&(x+2)\set{(x-d+\frac{t}{\alpha}+\frac{t}{\beta})(x+2)+(-d+\beta-2)\frac{t}{\beta}}
			+\frac{t}{\alpha}\set{(-d+\alpha-2)(x+2)}\\
	&&=(x+2)\set{x^2+(-d+\frac{t}{\alpha}+\frac{t}{\beta}+2)x+(-2d-\frac{dt}{\alpha}-\frac{dt}{\beta}+2t)}.
\end{eqnarray*}
The largest root of the characteristic polynomial is 
\begin{eqnarray}\label{odd_largeeq}
	\frac{d-2-\frac{t}{\alpha}-\frac{t}{\beta}+\sqrt{(d-2-\frac{t}{\alpha}-\frac{t}{\beta})^2
	+4(2d+\frac{dt}{\alpha}+\frac{dt}{\beta}-2t)}}{2}.
\end{eqnarray}
%
%
We show that~(\ref{odd_largeeq}) is a non-increasing funcion of $\alpha$. Claim that
\begin{eqnarray}\label{odd_alpha_and_t}
	\nonumber
	&&\frac{d-2-\frac{t}{\alpha+1}-\frac{t}{\beta}+\sqrt{(d-2-\frac{t}{\alpha+1}-\frac{t}{\beta})^2
			+4(2d+\frac{dt}{\alpha+1}+\frac{dt}{\beta}-2t)}}{2}\\
	&&\leq\frac{d-2-\frac{t}{\alpha}-\frac{t}{\beta}+\sqrt{(d-2-\frac{t}{\alpha}-\frac{t}{\beta})^2
			+4(2d+\frac{dt}{\alpha}+\frac{dt}{\beta}-2t)}}{2}.
\end{eqnarray}
Since $\frac{t}{\alpha+1}-\frac{t}{\alpha}=\frac{-t}{\alpha(\alpha+1)}$, we want to show that
\begin{eqnarray*}
	0&\leq&\sqrt{(d-2-\frac{t}{\alpha}-\frac{t}{\beta})^2
			+4(2d+\frac{dt}{\alpha}+\frac{dt}{\beta}-2t)}\\
	&&-\frac{t}{\alpha(\alpha+1)}
			-\sqrt{(d-2-\frac{t}{\alpha+1}-\frac{t}{\beta})^2
			+4(2d+\frac{dt}{\alpha+1}+\frac{dt}{\beta}-2t)}.
\end{eqnarray*}
It suffices to show that
\begin{eqnarray}\label{eq12}
	\nonumber
	0&\leq&(d-2-\frac{t}{\alpha}-\frac{t}{\beta})^2
			+4(2d+\frac{dt}{\alpha}+\frac{dt}{\beta}-2t)\\
	\nonumber
	&&-\set{\frac{t}{\alpha(\alpha+1)}}^2
			-2\set{\frac{t}{\alpha(\alpha+1)}}
			\sqrt{(d-2-\frac{t}{\alpha+1}-\frac{t}{\beta})^2
			+4(2d+\frac{dt}{\alpha+1}+\frac{dt}{\beta}-2t)}\\
	\nonumber
	&&-(d-2-\frac{t}{\alpha+1}-\frac{t}{\beta})^2
			-4(2d+\frac{dt}{\alpha+1}+\frac{dt}{\beta}-2t)\\
	&=&(d-2-\frac{t}{\alpha}-\frac{t}{\beta})^2
			-(d-2-\frac{t}{\alpha+1}-\frac{t}{\beta})^2
			+4\frac{dt}{\alpha}-4\frac{dt}{\alpha+1}
			-\set{\frac{t}{\alpha(\alpha+1)}}^2\\
	\nonumber
	&&-2\set{\frac{t}{\alpha(\alpha+1)}}
			\sqrt{(d-2-\frac{t}{\alpha+1}-\frac{t}{\beta})^2
			+4(2d+\frac{dt}{\alpha+1}+\frac{dt}{\beta}-2t)}.
\end{eqnarray}
By simplifying~(\ref{eq12}), we have
\begin{eqnarray*}
	&&\set{(d-2-\frac{t}{\beta})-\frac{t}{\alpha}}^2
			-\set{(d-2-\frac{t}{\beta})-\frac{t}{\alpha+1}}^2
			+\frac{4dt}{\alpha(\alpha+1)}-\left[\frac{t}{\alpha(\alpha+1)}\right]^2	\\
	&&=-\frac{2t}{\alpha}(d-2-\frac{t}{\beta})+\frac{t^2}{\alpha^2}
			+\frac{2t}{\alpha+1}(d-2-\frac{t}{\beta})-\frac{t^2}{(\alpha+1)^2}
			+\frac{4dt}{\alpha(\alpha+1)}-\frac{t^2}{\alpha^2(\alpha+1)^2} \\
	&&=\set{\frac{2t}{\alpha+1}-\frac{2t}{\alpha}}(d-2-\frac{t}{\beta})
			+\frac{(\alpha+1)^2t^2-\alpha^2t^2}{\alpha^2(\alpha+1)^2}
			+\frac{4dt}{\alpha(\alpha+1)}-\frac{t^2}{\alpha^2(\alpha+1)^2} \\
	&&=\frac{-2t}{\alpha(\alpha+1)}(d-2-\frac{t}{\beta})+\frac{4dt}{\alpha(\alpha+1)}
			+\frac{(\alpha+1)^2t^2-\alpha^2t^2-t^2}{\alpha^2(\alpha+1)^2}   \\
	&&=\frac{-2t}{\alpha(\alpha+1)}(-d-2-\frac{t}{\beta})
			+\frac{2\alpha t^2}{\alpha^2(\alpha+1)^2}\\
	&&=\frac{-2t}{\alpha(\alpha+1)}(-d-2-\frac{t}{\beta}-\frac{t}{\alpha+1}).
\end{eqnarray*}
Thus we want to show that
\begin{eqnarray*}
	0&\leq&\frac{-2t}{\alpha(\alpha+1)}(-d-2-\frac{t}{\alpha+1}-\frac{t}{\beta})\\
	&&-2\left[\frac{t}{\alpha(\alpha+1)}\right]\sqrt{(d-2-\frac{t}{\alpha+1}-\frac{t}{\beta})^2
			+4(2d+\frac{dt}{\alpha+1}+\frac{dt}{\beta}-2t)},
\end{eqnarray*}
which suffices to show that
\begin{eqnarray*}
	0&\leq&\set{\frac{2t}{\alpha(\alpha+1)}}^2(d+2+\frac{t}{\alpha+1}+\frac{t}{\beta})^2\\
	&&-\set{\frac{2t}{\alpha(\alpha+1)}}^2
			\set{(d-2-\frac{t}{\alpha+1}-\frac{t}{\beta})^2
			+4(2d+\frac{dt}{\alpha+1}+\frac{dt}{\beta}-2t)}\\
	&=&\set{\frac{2t}{\alpha(\alpha+1)}}^2
			\set{(d+2+\frac{t}{\alpha+1}+\frac{t}{\beta})^2
			-(d-2-\frac{t}{\alpha+1}-\frac{t}{\beta})^2
			-4(2d+\frac{dt}{\alpha+1}+\frac{dt}{\beta}-2t)}.
\end{eqnarray*}
Note that
\begin{eqnarray*}
	&&(d+2+\frac{t}{\alpha+1}+\frac{t}{\beta})^2 
			-(d-2-\frac{t}{\alpha+1}-\frac{t}{\beta})^2
			-4(2d+\frac{dt}{\alpha+1}+\frac{dt}{\beta}-2t) \\
	&&=\set{d+(2+\frac{t}{\alpha+1}+\frac{t}{\beta})}^2 
			-\set{d-(2+\frac{t}{\alpha+1}+\frac{t}{\beta})}^2
			-4(2d+\frac{dt}{\alpha+1}+\frac{dt}{\beta}-2t) \\
	&&=2d(2+\frac{t}{\alpha+1}+\frac{t}{\beta}) 
			+2d(2+\frac{t}{\alpha+1}+\frac{t}{\beta})
			-4(2d+\frac{dt}{\alpha+1}+\frac{dt}{\beta}-2t) \\
	&&=4d(2+\frac{t}{\alpha+1}+\frac{t}{\beta}) 
			-4(2d+\frac{dt}{\alpha+1}+\frac{dt}{\beta}-2t) \\
	&&=8t. 
\end{eqnarray*}
Thus inequality~(\ref{odd_alpha_and_t}) holds.

Similarly, we can show that~(\ref{odd_largeeq}) is a non-increasing funcion of $\beta$.

Since $\alpha, \beta\leq t$, if we replace both $\alpha$ and $\beta$
with $t$ in $\widetilde{Q'_1}$, then we have

\[\widetilde{Q''_1}=\left(
\begin{array}{ccc}
	-2    & 1   & 0         \\
	d-t+2 & d-2 & d-t+2 \\
	0 	  & 1   & -2
\end{array} \right).\]
Then since ($\ref{even_largeeq}$) is a non-increasing function of $\alpha$ and $\beta$, we have
\[\lambda_2(G) \geq \lambda_2(Q_1)=\lambda_1(\widetilde{Q_1})\geq\lambda_1(\widetilde{Q'_1})\geq\lambda_1(\widetilde{Q''_1})=\frac{d-4+\sqrt{(d+4)^2-8t}}{2},\]
 by Corollary~\ref{quotient_eig_interlace}, Theorem~\ref{DRG-BCN},and~(\ref{odd_by_PF}).
This completes the proof of the second statement in Theorem~\ref{main}.
\end{proof}



\end{document}